\numberwithin{equation}{section}
\newtheorem{theorem}{Theorem}[section]
\newtheorem{lemma}[theorem]{Lemma}
\newtheorem{remark}{Remark}[section]
\declaretheoremstyle[headfont=\normalfont]{normalhead}
\title[Sharp Beckner's Inequality]{On Sharp Beckner's Inequality for Axially Symmetric Functions on $\mathbb{S}^4$}
\author{Tuoxin Li}
\address{Department of Mathematics, University of British Columbia,Vancouver, Canada}
\email{tuoxin@math.ubc.ca}
\author{Juncheng Wei}
\address{Department of Mathematics, University of British Columbia,Vancouver, Canada}
\email{jcwei@math.ubc.ca}
\author{Zikai Ye}
\address{Department of Mathematics, University of British Columbia,Vancouver, Canada}
\email{yezikai@math.ubc.ca}
\begin{document}

\maketitle

\begin{abstract}
We prove that axially symmetric solutions to the $Q$-curvature type problem
$$ \alpha P_4 u + 6(1-\frac{e^{4u}}{\int_{\mathbb{S}^4} e^{4u}})=0 \ \ \ \ \ \mbox{on} \ \mathbb{S}^4 $$
must be constants, provided that $ \frac{1}{2}\le \alpha <1$. This result is sharp in view of the existence of nonconstant solutions to the equation by  Gui,Hu and Xie \cite{GHX2021} for $\frac{1}{5}< \alpha <\frac{1}{2}$.  As a consequence, we prove a sharp  Beckner’s inequality on
$\mathbb{S}^4$ for axially symmetric functions with  center of mass at the origin. This answers an open question in \cite{GHX2021} in which the corresponding results were proved for $ \alpha \geq 0.517$. To close the gap, we make use of some quantitative properties of Gegenbauer polynomials. One of the key ingredients in our proof is the pointwise estimate of large parameters asymptotic expansions for Gegenbauer polynomials proved in Nemes and Daalhuis \cite{Nemes2020}.
\end{abstract}

\section{Introduction and Main Results}
On $\mathbb{S}^4$, the Beckner's inequality (\cite{Beckner1993}), a higher order Moser-Trudinger inequality, says that the functional
\begin{equation*}
J_{\alpha}(u):=\frac{\alpha}{2}\left(\int_{\mathbb{S}^4}|\Delta u|^2
\text{d}w+2\int_{\mathbb{S}^4}|\nabla u|^2
\text{d}w\right)+6\int_{\mathbb{S}^4}u
\text{d}w-\frac{3}{2}\ln\int_{\mathbb{S}^4}e^{4u}
\text{d}w
\end{equation*}
is non-negative, for $\alpha =1$ and all $u\in H^2(\mathbb{S}^4)$, where d$w$ is the normalized Lebesgue measure on $\mathbb{S}^4$ with $\int_{\mathbb{S}^4}\text{d}w=1$.  On the other hand, an improved higher order Moser-Trudinger-Onofri type inequality holds if the center of the mass of $u$ is at the origin:  for  $u$ belonging to the set
\begin{equation*}
\mathcal{L}=\left\{u\in H^2(\mathbb{S}^4)\ :\ \int_{\mathbb{S}^4}e^{4u} x_j \text{d}w=0,\ j=1,..., 5\right\},
\end{equation*}
 and for any $\alpha\geq \frac{1}{2}$, there exists some constant $C(\alpha)\geq0$, such that $J_{\alpha}(u)\geq -C(\alpha)$. Here $x_j$'s are the coordinate components of $\mathbb{R}^5$. As in second order case (\cite{ChangYang1987}), it is conjectured that $C(\alpha)$ can be chosen to be $0$ for any $\alpha\geq \frac{1}{2}$.

The Euler-Lagrange equation of $J_\alpha$ is the constant $Q$-curvature-type equation on $\mathbb{S}^4$
\begin{equation}\label{paneitz}
\alpha P_4 u+6(1-\frac{e^{4u}}{\int_{\mathbb{S}^4}e^{4u}\text{d}w})=0 \ \mbox{on} \ \mathbb{S}^4,
\end{equation}
where $P_4=\Delta^2-2\Delta$ is the Paneitz operator on $\mathbb{S}^4$. The conjecture holds if the equation (\ref{paneitz}) admits only constant solutions. For $\alpha<1$ and close to $1$, it is proved by the second author and Xu \cite{WX1998} that all solutions to (\ref{paneitz}) are constants. It remains open for general $ \alpha \in [\frac{1}{2}, 1)$. For results and backgrounds on $Q$-curvature problems, we refer to \cite{ChangYang1995, ChangYang1997, DHL2000,DM2008,GHX2021, GurMal2015, LiXiong2019,  Mal2006, WX1998} and the references therein.

The counterpart of this problem on $\mathbb{S}^2$ is the so-called Nirenberg problem
$$ -\alpha \Delta u + 1-  \frac{e^{2u}}{\int_{\mathbb{S}^2} e^{2u}}=0 \ \ \mbox{on} \ \mathbb{S}^2$$
and it has received lots of attention in the last four decades. We refer to \cite{ChangYang1987, ChangYang1988, JinLiXiong2017} and the references therein.
It is conjectured by A.Chang and P.Yang \cite{ChangYang1987, ChangYang1988} that  the following functional
\begin{equation*}
\alpha\int_{\mathbb{S}^2}|\nabla u |^2\text{d}w+2\int_{\mathbb{S}^2} u\text{d}w- \ln \int_{\mathbb{S}^2}e^{2u} \text{d}w
\end{equation*}
is non-negative for any $\alpha\geq\frac{1}{2}$ and $u$ with zero center of mass $\int_{\mathbb{S}^2}e^{2u}\Vec{x}\text{d}w=0$. Feldman, Froese,Ghoussoub and Gui \cite{FFGG1998} proved that the conjecture is true for axially symmetric functions when $\alpha >\frac{16}{25}-\epsilon$. Gui and the second author  \cite{GW2000} proved that the conjecture is true for axially symmetric functions. Ghoussoub and Lin \cite{GL2010} showed that the conjecture is true for $\alpha >\frac{2}{3}-\epsilon$. Finally,   Gui and Moradifam \cite{GM2018} proved that the conjecture is indeed true.  See \cite{ChangHang2022, ChangGui202} for references and more general results on improved Moser-Trudinger-Onofri inequality  on $\mathbb{S}^2$ and relations with Szeg\"o limit theorem.

In this paper, we will study axially symmetric solution $u$ to \eqref{paneitz} for $ \alpha \in [\frac{1}{2}, 1)$. As in \cite{GHX2021},  \eqref{paneitz} becomes
\begin{equation}\label{axial}
\alpha[(1-x^2)^2u']'''+6-\frac{8e^{4u}}{\int_{-1}^{1}(1-x^2)e^{4u}}=0,\ x\in(-1,1),
\end{equation}
which is the critical point of the functional
\begin{align*}
I_\alpha(u)
&=\frac{\alpha}{2}\int_{-1}^{1}((1-x^2)|(1-x^2)u''|^2+6|(1-x^2)u'|^2)\\
&+6\int_{-1}^{1}(1-x^2)u-2\ln(\frac{3}{4}\int_{-1}^{1}(1-x^2)e^{4u})
\end{align*}
restricted to the set
\begin{equation}
\mathcal{L}_r=\{u\in H^2(\mathbb{S}^4):\ u=u(x)\text{ and }\int_{-1}^{1}x(1-x^2)e^{4u} dx=0\}.
\end{equation}

Our main result is
\begin{theorem}\label{main}
If $\alpha\geq \frac{1}{2}$, then the only critical point of the functional $I_\alpha$ restricted to $\mathcal{L}_r$ are constant functions. As a consequence we have the following improved Beckner's inequality for axially symmetric functions on $\mathbb{S}^4$
$$ \inf_{ u\in {\mathcal{L}_r}} I_\alpha (u)=0, \ \alpha \geq \frac{1}{2}. $$
\end{theorem}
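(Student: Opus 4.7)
The plan is to establish the rigidity statement -- that constants are the only critical points of $I_\alpha|_{\mathcal{L}_r}$ when $\alpha\ge 1/2$ -- from which the sharp inequality $\inf I_\alpha=0$ follows by a standard argument: $I_\alpha$ vanishes on constants, so $\inf\le 0$; concentration-compactness in $\mathcal{L}_r$ (the center-of-mass constraint rules out the usual conformal non-compactness) forces a minimizing sequence to subconverge to a critical point, and by the rigidity claim this critical point must be a constant, giving value $0$.

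Assume for contradiction that $u\in \mathcal{L}_r$ is a non-constant axially symmetric solution of (\ref{axial}) for some $\alpha\ge 1/2$. The first step is spectral: expand $u$ in the Gegenbauer basis $\{C_k^{3/2}(x)\}_{k\ge 0}$, whose elements are the axially symmetric spherical harmonics of degree $k$ on $\mathbb{S}^4$ and which diagonalize the Paneitz operator with eigenvalues $\mu_k = k(k+1)(k+2)(k+3)$. In these coordinates the operator $u \mapsto [(1-x^2)^2 u']'''$ acts diagonally on derivatives of $u$, and the center-of-mass condition $\int_{-1}^{1} x(1-x^2)e^{4u}\,dx=0$ annihilates the $k=1$ Fourier component of $e^{4u}$.

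The second step is to extract a master inequality. Testing (\ref{axial}) against carefully chosen functions -- $u$ itself, low-degree Gegenbauer combinations, and the specific test functions used in \cite{GHX2021} -- and combining the resulting identities with a sharp convexity inequality for $e^{4u}$ around the mean, I aim for a relation of the form $\sum_{k\ge 2} a_k(\alpha)|\hat{u}_k|^2 \le 0$, where $\hat u_k$ denotes the $k$-th Gegenbauer coefficient of $u$. A contradiction then follows once it is shown that $a_k(\alpha)>0$ for every $k\ge 2$ and every $\alpha\ge 1/2$. For $\alpha$ close to $1$ the coefficients $a_k$ are manifestly positive as in \cite{WX1998}, but as $\alpha\downarrow 1/2$ the positivity degenerates; in \cite{GHX2021} it could only be verified for $\alpha\ge 0.517$, with certain intermediate $k$ providing the obstruction.

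The main obstacle, and where the new input enters, is establishing $a_k(1/2)>0$ for all $k\ge 2$ uniformly. I would split into a finite range of low modes $2\le k\le K_0$, handled by explicit (possibly computer-assisted) evaluation of the defining Gegenbauer integrals, and a tail $k>K_0$. On the tail, $a_k(1/2)$ is controlled by pointwise bounds on $C_k^{3/2}(\cos\theta)$ that must be tight not only for $\theta$ bounded away from the endpoints but, crucially, uniformly down to $\theta$ of order $1/k$ where the polynomials oscillate most sharply. This uniform sharpness is exactly what the Nemes--Daalhuis asymptotic expansion \cite{Nemes2020} delivers, via its explicit and realistic error bounds; leading-order Darboux asymptotics used previously are too lossy near the endpoints and create the spurious negative contribution that prevented \cite{GHX2021} from reaching $\alpha=1/2$. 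Once $K_0$ is chosen large enough that the Nemes--Daalhuis remainder is absorbed and the finitely many low-mode inequalities are verified, the rigidity, and thus the theorem, follows.
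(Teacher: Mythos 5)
Your proposal captures the broad outline---a Gegenbauer expansion, a low-mode/tail splitting, and the use of the Nemes--Daalhuis asymptotics near the endpoints---but it misidentifies both the object being expanded and the actual engine of the argument.

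First, the paper does not expand $u$ in the Gegenbauer basis. It introduces $G=(1-x^2)u'$ and expands $G=\beta x+\sum_{k\ge 2}a_kF_k$, working with the normalized coefficients $b_k=a_k\sqrt{\int(1-x^2)F_k^2}$ of $G$, not of $u$. Correspondingly, the pointwise asymptotics that matter (Lemmas~\ref{lem34} and~\ref{lem35}) concern $\widetilde F_k'\propto C_{k-1}^{5/2}$, not $C_k^{3/2}$.

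Second, and more seriously, there is no ``master inequality $\sum_{k\ge 2}a_k(\alpha)|\hat u_k|^2\le 0$'' with fixed coefficients whose positivity for $\alpha\ge 1/2$ would conclude. The mechanism is instead a bootstrap/induction on the scalar $a=\tfrac{4}{5}(1-\alpha\beta)$: one proves $a\le 5/\lambda_n$ for every $n\ge 3$, hence $a=0$, contradicting $0<\beta<1/\alpha$. The reason \cite{GHX2021} stalls at $\alpha\ge 0.517$ is not that some mode's coefficient turns negative; it is that their bound $|A_k|\le a$ (from $|\widetilde F_k'|\le 1$) gives $b_k^2\lesssim a^2$ with no gain, so the resulting quadratic inequality in $a$ (the analogue of \eqref{405}) fails to close the induction for large $n$. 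The entire point of Theorem~\ref{A} is to upgrade this to $|A_k|\le a-c\,\lambda_k a^2$ (with parity-dependent constants), a second-order correction in $a$ that is precisely of the right size in the critical regime $a\lambda_k=O(1)$ where the induction is tight. Obtaining this correction requires the very specific pointwise structure of $\widetilde F_k'$---uniformly below $0.11$ away from a $d/\lambda_k$-neighbourhood of $x=1$ and bounded below by $-0.081$---which is what the Nemes--Daalhuis expansion is used for. None of this structure appears in your proposal, which at the crucial step merely posits a positivity claim that does not hold in the form stated. Finally, for the ``as a consequence'' part, concentration-compactness is not needed from scratch: lower-boundedness of $I_\alpha$ on $\mathcal{L}_r$ with a non-sharp constant and the existence of a minimizer are already available, so rigidity alone pins the infimum at $0$.
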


The assumption on $\alpha$ in Theorem \ref{main} is sharp. By bifurcation methods, Gui,Hu and Xie \cite{GHX2021} proved that Theorem \ref{main} fails for $\frac{1}{5}< \alpha <\frac{1}{2}$. They also showed that Theorem \ref{main} holds for $\alpha\geq 0.517$, using similar strategies as in \cite{FFGG1998, GW2000}. More precisely, they expanded $G=(1-x^2)u'$ in terms of Gegenbauer polynomials and introduced a quantity $D$ (see \eqref{402}) and obtained an inequality for each $n\geq 3$. From these inequalities, they expected to go through similar induction procedure as in  \cite{GW2000}. However, the estimates of Gegenbauer coefficients of $G$ they have obtained are not refined enough to make the induction procedure work for large modes. As a consequence, one cannot obtain the optimal constant $\alpha \geq \frac{1}{2}$ by their method. See the discussion in [Section 6, \cite{GHX2021}] for more details.

We will use the same strategy, as in \cite{GW2000, GHX2021}, but with more refined estimates on Gegenbauer coefficients of $G$. In particular we make use of pointwise estimates proved in [Corollary 5.3, Nemes and Daalhuis\cite{Nemes2020}].  By refining the behavior of Gegenbauer polynomial near $x=\pm 1$ and using the decaying properties away from $x=\pm 1$, we  show that the induction procedure in \cite{GW2000} still works in this setting.

Under similar settings, this problem can be generalized to $\mathbb{S}^n, n\geq 3$. Gui, Hu and Xie \cite{GHW2022} showed that the counterparts of Theorem \ref{main} fails for $\frac{1}{n+1}\leq \alpha<\frac{1}{2}$. When $n=6,\ 8$, they showed that for $\alpha \geq  0.6168$ ($n=6$) and $ \alpha \geq  0.8261$ ($n=8$), all critical points are constants.  Whether or not the optimal constant is $\alpha=\frac{1}{2}$ remains unknown. We believe that our estimates in this paper can give a unified proof for sharp Beckner's inequality on $\mathbb{S}^n$, at least in the axially symmetric case. We will return to this in a future work.

The organization of this paper is as follows. In Section 2 we collect some properties of Gegenbauer polynomials and the expansions of $G=(1-x^2)u'$ (proved in \cite{GHX2021}). In Section 3 we give  refined estimates on the Gegenbauer coefficients of $G$ (Theorem \ref{A}). In Section 4 we prove the main Theorem \ref{main} by induction argument. Three technical lemmas (Lemmas \ref{lem32}, \ref{lem34} and \ref{lem35}) are proved in the appendices.

\section{Preliminaries and some basic estimates}
In this section, we collect some properties of Gegenbauer polynomials and some known facts on equation (\ref{axial}).

Recall that the Gegenbauer polynomials of  order $\nu$ and degree $k$ (\cite{Mori1998}) is given by
\begin{equation*}
C_{k}^{\nu}(x)=\frac{(-1)^k}{2^k n!}\frac{\Gamma(\nu+\frac{1}{2})\Gamma(k+2\nu)}{\Gamma(2\nu)\Gamma(\nu+k+\frac{1}{2})}(1-x^2)^{-\nu+\frac{1}{2}}\frac{d^k}{dx^k} (1-x^2)^{k+\nu-\frac{1}{2}}.
\end{equation*}

The derivative of $C_{k}^{\nu}$ satisfies
\begin{equation}\label{201}
\frac{d}{dx}C_{k}^{\nu}(x)=2\nu C_{k-1}^{\nu+1}(x).
\end{equation}

On $\mathbb{S}^4$ the corresponding Gegenbauer polynomial for bi-Laplacian  is $C_k^{\frac{3}{2}}$. (On $\mathbb{S}^2$ it is $C_k^{\frac{1}{2}}$.)  Let $F_k$ be the normalization of $C_{k}^{\frac{3}{2}}$ such that $F_k(1)=1$. More precisely,
\begin{equation*}
F_k:=\frac{2}{(k+1)(k+2)}C_{k}^{\frac{3}2}.
\end{equation*}
The first few terms of $F_k$ are given as follows
\begin{equation*}
F_0=1,\ F_1=x,\ F_2=\frac{5}{4}x^2-1,\ F_3=\frac{7}{4}x^3-\frac{3}{4}x.
\end{equation*}

Also, $F_k$ satisfies
\begin{equation}
(1-x^2)F_k''-4xF_k'+\lambda_k F_k=0, \lambda_k=k(k+3)
\end{equation}
and
\begin{equation}
\int_{-1}^{1}(1-x^2)F_{k}F_l=\frac{8}{(2k+3)(k+1)(k+2)}\delta_{kl}.
\end{equation}

As in \cite{GHX2021, GW2000}, we define the following key quantity
\begin{equation}
G(x)=(1-x^2)u',
\end{equation}
where $u$ is a solution to \eqref{axial}. Then $G$ satisfies the equation
\begin{equation}
\alpha((1-x^2)G)'''+6-\frac{8}{\gamma}e^{4u}=0,
\end{equation}
where
\begin{equation}
    \gamma=\int_{-1}^{1}(1-x^2)e^{4u}.
\end{equation}

We can expand $G$ in terms of the Gegenbauer polynomials $F_k$:
\begin{equation}
\label{Gexpand}
G=a_0F_0+\beta x+a_2F_2(x)+\sum_{k=3}^{\infty}a_kF_k(x).
\end{equation}

Denote
\begin{equation}
\label{gdef}
g= (1-x^2) \frac{e^{4u}}{\gamma}, \ a:=\int_{-1}^1 (1-x^2)g.
\end{equation}

We recall the following results from  \cite{GHX2021}:
\begin{lemma}[Lemma 2.2 in \cite{GHX2021}] For $g=(1-x^2)\frac{e^{4u}}{\gamma}$ and $G=(1-x^2)u'$ as above, we have $a_0=0$ and
\begin{equation}\label{}
\int_{-1}^{1}(1-x^2)F_1G=\frac{4}{15}\beta,
\end{equation}
\begin{equation}
a=\int_{-1}^{1}(1-x^2)g=\frac{4}{5}(1-\alpha\beta),
\end{equation}
\begin{equation}\label{bk}
\int_{-1}^{1}(1-x^2)F_k G=-\frac{8}{\alpha\lambda_k(\lambda_k+2)}\int_{-1}^{1}(1-x^2)gF_{k}',\text{ }k\geq 2,
\end{equation}
\begin{equation}\label{G'}
\int_{-1}^{1}|[(1-x^2)G]'|^2=\frac{16}{15}(5-\frac{1}{\alpha})\beta.
\end{equation}
\end{lemma}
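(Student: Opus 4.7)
The plan is to multiply the equation
\begin{equation*}
\alpha\bigl((1-x^2)G\bigr)'''+6-\frac{8e^{4u}}{\gamma}=0
\end{equation*}
by carefully chosen test functions weighted by powers of $(1-x^2)$ and to integrate by parts, exploiting three structural facts: (i) $H:=(1-x^2)G=(1-x^2)^2u'$ satisfies $H(\pm 1)=H'(\pm 1)=0$; (ii) the center-of-mass condition $\int_{-1}^1 x(1-x^2)e^{4u}\,dx=0$; and (iii) the algebraic consequences of the Gegenbauer ODE
\begin{equation*}
\bigl((1-x^2)F_k\bigr)''=-(\lambda_k+2)F_k,\qquad \bigl((1-x^2)^2F_k'\bigr)'=-\lambda_k(1-x^2)F_k,
\end{equation*}
which, upon differentiating the second identity twice, yield the remarkably clean
\begin{equation*}
\bigl((1-x^2)^2F_k'\bigr)'''=\lambda_k(\lambda_k+2)F_k.
\end{equation*}

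The identity $\int(1-x^2)F_1G=\tfrac{4}{15}\beta$ is immediate from the orthogonality of the $F_k$ under the weight $(1-x^2)$ together with the expansion \eqref{Gexpand}, using $\|F_1\|^2_{(1-x^2)}=\tfrac{4}{15}$. For $a_0=0$, I would test the equation against $(1-x^2)F_1=x(1-x^2)$: three IBPs (the boundary terms vanishing thanks to (i)) reduce the cubic term to $6\alpha\int(1-x^2)G$; the linear term $6\int x(1-x^2)$ vanishes by parity; and the exponential term $\tfrac{8}{\gamma}\int x(1-x^2)e^{4u}$ vanishes by (ii). Hence $\int(1-x^2)G=0$, which is equivalent to $a_0=0$.

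For $a=\tfrac{4}{5}(1-\alpha\beta)$, test against $(1-x^2)^2$; because $(1-x^2)^2$ has a double zero at $\pm 1$, every boundary term in the IBPs vanishes automatically, and the cubic term collapses to $-\int\bigl((1-x^2)^2\bigr)'''H=-24\int xH=-\tfrac{32}{5}\beta$. Solving the resulting linear equation in $a$ yields the formula. For \eqref{bk}, test against $(1-x^2)^2F_k'$; the contribution from the $6$ is $6\int(1-x^2)^2F_k'=24\int(1-x^2)F_1F_k$ (after one IBP), which vanishes for $k\neq 1$ by orthogonality; and three IBPs combined with the third Gegenbauer identity above transform the cubic term into exactly $-\alpha\lambda_k(\lambda_k+2)\int(1-x^2)F_kG$, giving \eqref{bk}.

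Finally, for \eqref{G'}, test against $xH=x(1-x^2)G$: two IBPs using (i) give $\int xH\,H'''=\tfrac{3}{2}\int|H'|^2$; the contribution from the $6$ in the equation is $6\int xH=\tfrac{8}{5}\beta$; and the exponential piece $\int xH\,e^{4u}/\gamma$, after one IBP using $4u'e^{4u}=(e^{4u})'$ together with the already-known values $\int g=1$ and $\int(1-x^2)g=a$, becomes $\tfrac{4-5a}{4}$. Substituting the already-proved value of $a$ and simplifying produces the identity. The main subtlety throughout is guessing the right multiplier for each formula—especially the choice $(1-x^2)^2F_k'$ for \eqref{bk}, picked precisely so that its third derivative is a scalar multiple of $F_k$, thereby automatically producing the desired inner product.
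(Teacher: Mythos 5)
Your proof is correct—I have checked each of the four identities and the claim $a_0=0$, and the computations go through exactly as you describe (the multipliers $x(1-x^2)$, $(1-x^2)^2$, $(1-x^2)^2F_k'$, and $xH$ each do precisely what you intend, with all boundary terms vanishing because $H=(1-x^2)^2u'$ has a double zero at $\pm 1$). Note that the present paper does not prove this lemma but quotes it from \cite{GHX2021}; the test-function/integration-by-parts route you take is the natural derivation, and in particular the observation that $((1-x^2)^2F_k')'''=\lambda_k(\lambda_k+2)F_k$ is exactly what makes the $k$-th multiplier produce the $F_k$ projection in \eqref{bk}, so this is almost certainly the same argument as in the cited reference.
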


\begin{lemma}[Lemma 3.1 and Lemma 3.2 in \cite{GHX2021}]\label{floor}
There holds
\begin{equation}
\label{Gbd}
G' \leq \frac{1}{\alpha},
\end{equation}
\begin{equation}
\lfloor G\rfloor^2\leq (\frac{4}{\alpha}-6)\int_{-1}^{1}|[(1-x^2)G]'|^2+\frac{16}{\alpha}\int_{-1}^{1}(1-x^2)G^2,
\end{equation}
where
\begin{equation}
\lfloor G\rfloor^2= \int_{-1}^{1}(1-x^2)[(1-x^2)^2G']'''G.
\end{equation}
\end{lemma}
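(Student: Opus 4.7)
Both estimates rest on the equation \eqref{axial} in the form $\alpha K''' = 8 e^{4u}/\gamma - 6$, where $K = (1-x^2) G = (1-x^2)^2 u'$, together with the algebraic identity $K''' = (1-x^2) G''' - 6 x G'' - 6 G'$ obtained by differentiating $K = (1-x^2) G$ three times.

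\textbf{First inequality, $G' \le 1/\alpha$.} I would apply a maximum principle to $G'$. If the supremum is attained at an interior $x_0 \in (-1,1)$, then $G''(x_0) = 0$ and $G'''(x_0) \le 0$, and substituting into the equation while using $e^{4u(x_0)}/\gamma > 0$ gives
\[
\alpha \bigl[(1-x_0^2) G'''(x_0) - 6 G'(x_0)\bigr] = \frac{8 e^{4u(x_0)}}{\gamma} - 6 \ge -6,
\]
which rearranges to $G'(x_0) \le 1/\alpha + (1-x_0^2) G'''(x_0)/6 \le 1/\alpha$. If the supremum is attained at a boundary point $x_0 = 1$, then $G(1)=0$ together with the one-sided difference-quotient characterization of $G''(1)$ force $G''(1) \ge 0$; substituting $x=1$ into the equation gives $-6\alpha G'(1) - 6\alpha G''(1) \ge -6$, hence $G'(1) \le 1/\alpha$. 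The case $x_0 = -1$ is symmetric.

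\textbf{Second inequality.} Three integrations by parts (all boundary contributions vanish by $K(\pm 1) = K'(\pm 1) = 0$ and $(1-x^2)^2 G'|_{\pm 1} = 0$) give
\[
\lfloor G \rfloor^2 = -\int_{-1}^1 (1-x^2)^2 G'\, K'''\, dx.
\]
Substituting the equation and using $u' = G/(1-x^2)$, a further integration by parts (with $g = (1-x^2) e^{4u}/\gamma$) yields
\[
\alpha \lfloor G \rfloor^2 = \frac{32 \beta}{5} - 32 \int_{-1}^1 x G g\, dx + 32 \int_{-1}^1 G^2 g\, dx.
\]
The normalization $\int g\, dx = 1$ (from $\gamma = \int(1-x^2) e^{4u}$) together with Lemma~2.2's identity $\int(1-x^2) g\, dx = 4(1-\alpha\beta)/5$ give $\int x^2 g\, dx = (1+4\alpha\beta)/5$; a short IBP then produces the key identity $\int x G g\, dx = \alpha\beta$, so
\[
\alpha \lfloor G \rfloor^2 = \frac{32(1-5\alpha)\beta}{5} + 32 \int_{-1}^1 G^2 g\, dx.
\]
Now the first inequality combined with $G(\pm 1) = 0$ yields the pointwise quadratic $(1-x^2) + 2\alpha x G - \alpha^2 G^2 \ge 0$; multiplying by $g \ge 0$, integrating, and combining with $\int x G g = \alpha\beta$ and Lemma~2.2's formula $\int(K')^2 = \tfrac{16}{15}(5 - 1/\alpha)\beta$, one obtains, after algebraic rearrangement, the stated upper bound for $\lfloor G\rfloor^2$ in terms of $\int(K')^2$ and $\int(1-x^2) G^2$.

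\textbf{Main obstacle.} The first bound is a direct maximum-principle calculation once $K'''$ has been expanded. The harder step is the second: the identity $\int x G g = \alpha\beta$, crucial for collecting $\beta$-linear contributions into the coefficient $4/\alpha - 6$ on $\int(K')^2$, is nontrivial and hinges on the normalization $\int g = 1$, while the pointwise quadratic inequality for $G$ is what converts the nonlinear quantity $\int G^2 g$ into a multiple of $\int(1-x^2) G^2$ with the sharp constant $16/\alpha$.
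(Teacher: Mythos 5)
The paper does not reprove this lemma; it cites it directly as Lemma~3.1 and Lemma~3.2 of \cite{GHX2021}, so there is no internal proof to compare against. Evaluated on its own terms, your maximum-principle derivation of $G'\le 1/\alpha$ is sound: expanding $K'''=(1-x^2)G'''-6xG''-6G'$, using $\alpha K'''=8e^{4u}/\gamma-6>-6$, and treating the interior and boundary cases as you do is a clean and correct argument (the invocation of $G(1)=0$ in the boundary case is superfluous, but harmless). Your intermediate identities for the second inequality are also correct: the triple IBP giving $\lfloor G\rfloor^2=-\int(1-x^2)^2G'\,K'''$, the further IBP yielding $\alpha\lfloor G\rfloor^2=\tfrac{32\beta}{5}-32\int xGg+32\int G^2g$, the computation $\int x^2 g=(1+4\alpha\beta)/5$, and the identity $\int xGg=\alpha\beta$ all check out, as does the pointwise quadratic $(1-x^2)+2\alpha xG-\alpha^2G^2\ge 0$.

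The gap is in the final step. Multiplying that quadratic by $g$ and integrating gives only
\[
\alpha^2\int G^2 g\ \le\ a+2\alpha^2\beta,
\]
a bound that depends on $(\alpha,\beta,a)$ alone and never involves $\int(1-x^2)G^2$. Feeding it into $\alpha\lfloor G\rfloor^2=\tfrac{32(1-5\alpha)\beta}{5}+32\int G^2g$ and using $\int|K'|^2=\tfrac{16}{15}(5-\tfrac1\alpha)\beta$, the stated inequality would reduce (after the algebraic rearrangement you allude to) to
\[
\int_{-1}^1(1-x^2)G^2\ \ge\ \frac{8}{5\alpha^2}+\frac{4\beta}{3}\Bigl(2-\frac1\alpha\Bigr),
\]
a positive lower bound on $\int(1-x^2)G^2$ that is simply not available; for instance, when $G$ is small (so that $\lfloor G\rfloor^2$, $\int|K'|^2$, and $\int(1-x^2)G^2$ are all small) your chain produces the vacuous constant bound $\lfloor G\rfloor^2\lesssim 128/(5\alpha^3)$, which does not imply the target. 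The point is that the target inequality carries the genuine quantity $\tfrac{16}{\alpha}\int(1-x^2)G^2$ on the right, and your pointwise estimate, integrated against $g$, is far too lossy to produce that term — it replaces $\int G^2 g$ by an $O(1)$ constant rather than by something comparable to $\int(1-x^2)G^2$. You need an additional mechanism (for example, an estimate that compares $\int G^2 g$ directly to $\int(1-x^2)G^2$ and $\int|K'|^2$, or a different choice of test function when integrating the pointwise quadratic) to generate the $\int(1-x^2)G^2$ contribution with the sharp coefficient $16/\alpha$.
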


\section{Refined Estimates on $b_k$'s}

Let $b_{k}=a_{k} \sqrt{ \int_{-1}^{1}(1-x^2)F_{k}^{2}}$, where $a_k$ is the $k$-th coefficient in the expansion of $G$ (see (\ref{Gexpand})). The estimates of $b_k$ play a key role in the proofs of \cite{GHX2021, GW2000}. In \cite{GHX2021}, they used \eqref{bk} and the fact that
\begin{equation}
|F_k'(x)|\leq |F_k'(1)|=\frac{\lambda_k}{4}
\end{equation}
to estimate $b_k$ as follows
\begin{align*}
b_{k}^{2}&=a_{k}^2\int_{-1}^{1}(1-x^2)F_{k}^{2}=\frac{1}{\int_{-1}^{1}(1-x^2)F_{k}^{2}}\left[\frac{8}{\alpha\lambda_k(\lambda_k+2)}\int_{-1}^{1}(1-x^2)gF_{k}'\right]^2\\
&\leq \frac{(2k+3)(k+1)(k+2)}{8}\left[\frac{8}{\alpha\lambda_k(\lambda_k+2)}\frac{\lambda_k}{4}a\right]^2\\
&=\frac{2k+3}{2\alpha^2(\lambda_k+2)}a^2.
\end{align*}

As discussed in Section 6 of \cite{GHX2021}, the above estimates are not sufficient to deduce the induction
\begin{equation}
\label{a1}
a=\frac{4}{5} (1-\alpha \beta) \leq \frac{d}{\lambda_{k}}
\end{equation}
as in \cite{GW2000}. With the bounds for $b_k$ the induction (\ref{a1}) fails for large $k$.

The above discussions motivate us to find more refined estimate on $|F_k'|$, and hence on $b_k$, for large $k$. A key observation is that  $F_{k}'$ attains its maximum only at $\pm 1$ and it decays rapidly away from $\pm 1$. See Figure \ref{figure1} and Figure \ref{figure2} below. As a result, we can improve $a$ to be $a-0.089\lambda_ka^2$. For simplicity, in the rest of the paper, we will denote
\begin{equation}
\widetilde{F}_k':=\frac{4}{\lambda_k}F_k'=\frac{24}{\lambda_k(\lambda_k+2)}C_{k-1}^{\frac{5}{2}}
\end{equation}
so that $\widetilde{F}_k'(1)=1$. One way to improve the estimate in (\ref{bk}) is to split the right hand integral in (\ref{bk}) into two parts. To this end, we define
\begin{equation}
a_+:=\int_{0}^{1}(1-x^2)g, \ a_-:=\int_{-1}^{0}(1-x^2)g,
\end{equation}
where we recall  $g=(1-x^2)\frac{e^{4u}}{\gamma}, \ a=\int_{-1}^1 (1-x^2)g=a_{+}+a_{-}$.  Without loss of generality, we may assume $a_+=\lambda a$ with $\frac{1}{2}\le \lambda \le 1$.

The following theorem gives the key refined estimate for $b_k$.
\begin{theorem}\label{A}
Let $A_k:=\int_{-1}^{1}(1-x^2)g \widetilde{F}_{k}'$. If $a\le \frac{5}{\lambda_n}$, then for all $2\le k\le n$,
\begin{equation}\label{301}
|A_k|\leq
\begin{cases}
(0.081+0.919\lambda)a-0.089\lambda^2\lambda_k a^2&\text{ if }k \text{ is even},\\
a-0.089\lambda_k(2\lambda^2-2\lambda+1)a^2&\text{ if }k \text{ is odd}.
\end{cases}
\end{equation}
As a consequence, $b_k$ satisfies
\begin{equation}\label{302}
b_{k}^{2}\leq \frac{2k+3}{2\alpha^2(\lambda_k+2)}
\begin{cases}
{[}(0.081+0.919\lambda)a-0.089\lambda^2\lambda_k a^2]^2&\text{ if }k \text{ is even},\\
{[}a-0.089\lambda_k(2\lambda^2-2\lambda+1)a^2]^2&\text{ if }k \text{ is odd}.
\end{cases}
\end{equation}
\end{theorem}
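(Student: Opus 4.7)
The proof strategy exploits the fact that $\widetilde{F}_k'$ attains its endpoint maxima $\pm 1$ only at $x=\pm 1$ and decays rapidly in the interior. Since $F_k(-x)=(-1)^k F_k(x)$, the derivative $\widetilde{F}_k'$ is even for odd $k$ and odd for even $k$. A change of variables on $[-1,0]$ therefore yields
\begin{equation*}
A_k \;=\; \int_0^1 (1-x^2)\bigl[g(x)+(-1)^{k+1} g(-x)\bigr]\widetilde{F}_k'(x)\,dx,
\end{equation*}
which already explains the parity dichotomy in \eqref{301}: for odd $k$ the two half-contributions add constructively (so the linear coefficient is the full mass $a=a_+ + a_-$), whereas for even $k$ they interfere and only a $\lambda$-dependent combination survives, producing the leading coefficient $0.081+0.919\lambda$ rather than $a$ itself.

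The refined analytic input is a pointwise bound of the form $\widetilde{F}_k'(x)\le 1 - c\lambda_k\,\varphi(x)$ on a neighborhood of $x=1$, with $\varphi\ge 0$ measuring how quickly $\widetilde{F}_k'$ departs from its maximum, complemented by a decay estimate $|\widetilde{F}_k'(x)|\lesssim 1/\sqrt{\lambda_k}$ in the bulk of $[0,1]$. Both are extracted from the large-parameter pointwise asymptotic expansion of Gegenbauer polynomials in \cite[Corollary 5.3]{Nemes2020}. Plugging this into the half-line integrals and using the mass constraints $\int_0^1 (1-x^2)g = \lambda a$ and $\int_{-1}^0 (1-x^2)g = (1-\lambda)a$, the non-negativity of $g$, and the hypothesis $a\le 5/\lambda_n$, a Jensen/reverse-Cauchy--Schwarz-type argument converts the pointwise improvement into a quadratic correction: one shows $\int_0^1(1-x^2)g\,\varphi \ge c' a_+^2 = c'\lambda^2 a^2$. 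Assembling the two halves according to parity yields the correction $0.089\lambda^2\lambda_k a^2$ in the even case and, summing the two symmetric endpoint contributions, $0.089(\lambda^2+(1-\lambda)^2)\lambda_k a^2 = 0.089(2\lambda^2-2\lambda+1)\lambda_k a^2$ in the odd case.

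Once \eqref{301} is established, \eqref{302} is immediate. From \eqref{bk}, the identity $F_k'=(\lambda_k/4)\widetilde{F}_k'$, the normalization $\int_{-1}^1(1-x^2)F_k^2 = 8/[(2k+3)(k+1)(k+2)]$, and the arithmetic identity $(k+1)(k+2)=\lambda_k+2$, one computes
\begin{equation*}
b_k^2 \;=\; \frac{\bigl(\int_{-1}^1(1-x^2)F_k G\bigr)^2}{\int_{-1}^1(1-x^2)F_k^2} \;=\; \frac{2k+3}{2\alpha^2(\lambda_k+2)}\,A_k^2,
\end{equation*}
and squaring the bound on $|A_k|$ gives \eqref{302}.

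The principal obstacle is pinning down the explicit numerical constants $0.089$ and $0.081+0.919\lambda$. These emerge from carefully tracking the leading term of the Nemes--Daalhuis expansion near $x=1$ uniformly in $k$, controlling the remainder on the bulk interval, and then optimizing the resulting quadratic-in-$a_+$ expressions over admissible mass distributions subject to the standing constraint $a\le 5/\lambda_n$. The bookkeeping is delicate and is the reason three technical lemmas (Lemmas \ref{lem32}, \ref{lem34}, \ref{lem35}) are deferred to the appendices; conceptually, however, the estimates are a straightforward refinement of the crude bound $|\widetilde{F}_k'|\le 1$ used in \cite{GHX2021}, precisely calibrated so that the induction scheme of \cite{GW2000} closes for all $k\ge 3$ rather than just for small modes.
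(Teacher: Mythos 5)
Your proposal is correct and follows essentially the same route as the paper: split $A_k$ at $x=0$ using the parity of $\widetilde{F}_k'$, apply the Nemes--Daalhuis pointwise estimates to bound $\widetilde{F}_k'$ by a small constant in the bulk and by a linear ramp $1-0.089\lambda_k(1-x)$ near $x=1$, use a Cauchy--Schwarz/Jensen step on the moment $\int(1-x^2)(1-x)g$ together with the hypothesis $a\le 5/\lambda_n$ to turn the ramp into the quadratic correction in $a_\pm$, and reassemble by parity; \eqref{302} then follows mechanically from \eqref{bk} and the normalization $(k+1)(k+2)=\lambda_k+2$. The one cosmetic difference is that you describe the bulk bound as decaying like $1/\sqrt{\lambda_k}$, whereas the paper only needs the fixed constants $-0.081\le\widetilde{F}_k'\le 0.11$ from Lemmas~\ref{lem34} and~\ref{lem35}, but this does not affect the argument.
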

\begin{remark}
The reason that we have to expand the estimate of $b_k$ to the next order term is that for large $k$, the induction region is $ a \lambda_k \sim O(1)$ and hence the next order term should not be neglected.
\end{remark}

Before we prove Theorem \ref{A}, we can first consider some cases where $k$ is small. In fact, using the fact that $\int_{-1}^1 x (1-x^2) e^{4u}=0$  we can obtain much better estimates for small $k$'s. The proof is left to Appendix \ref{appendix}.

\begin{lemma}\label{lem32}
Let $A_k$ be as in Theorem \ref{A}. Then
\begin{eqnarray}
    |A_2|&\le& a_+\sqrt{1-\frac{2a_+}{a+1}},\label{303}\\
    |A_3|&\le& \max\{ a-\frac{7}{3}\frac{a^2}{a+1}(2\lambda^2-2\lambda+1), \frac{a}{6}\},\label{304}\\
    |A_4|&\le&  A_{1,1}^+-3(A_{1,1}^+)^2+\frac{1}{9}a_-,\label{305}\\
    |A_5|&\le& a-\frac{6(a_+^2+a_-^2)}{a+1}+\frac{33(a_+^3+a_-^3)}{4(a+1)^2}.\label{306}
\end{eqnarray}
\end{lemma}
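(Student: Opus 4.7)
The plan is to treat the four inequalities (\ref{303})--(\ref{306}) separately, following a common template. For each $k\in\{2,3,4,5\}$, first compute $\widetilde F_k'(x)$ as an explicit polynomial of degree $k-1$ (for instance $\widetilde F_2'=x$, $\widetilde F_3'=(7x^2-1)/6$, $\widetilde F_4'=x(3x^2-1)/2$, and $\widetilde F_5'=(1155x^4-630x^2+35)/560$) and expand
\begin{equation*}
A_k=\int_{-1}^{1}(1-x^2)g(x)\widetilde F_k'(x)\,dx
\end{equation*}
as a linear combination of the moments $\int_{-1}^{1}x^j g\,dx$. The three known identities $\int g=1$, $\int xg=0$, $\int x^2 g=1-a$ (the last coming from $\int(1-x^2)g=a$) eliminate the three lowest powers of $x$, leaving $A_k$ as a combination of higher moments plus a constant depending only on $a$. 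The remaining higher moments are then bounded by Cauchy--Schwarz applied on $[0,1]$ and $[-1,0]$ separately, so that the partial masses $a_+,a_-$, and hence $\lambda$, enter the estimates.

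Concretely, $A_2=-\int x^3 g\,dx$ after using $\int xg=0$; a Cauchy--Schwarz of the form $(\int_0^1 x(1-x^2)g\,dx)^2\le(\int_0^1(1-x^2)^2 g\,dx)(\int_0^1 x^2 g\,dx)$ on each half, combined with the coupling $\int_0^1 xg\,dx=-\int_{-1}^0 xg\,dx$ enforced by the mean-zero constraint, should produce the square-root factor $\sqrt{1-2a_+/(a+1)}$. For $k=3$, the pointwise bound $\widetilde F_3'\ge -1/6$ on $[-1,1]$ immediately yields $A_3\ge -a/6$, giving the $a/6$ branch of the maximum; the upper bound requires a lower bound on $\int(1-x^2)^2 g$, obtained by a weighted half-interval Cauchy--Schwarz in which the factor $(a+1)$ enters as $\int(2-x^2)g=1+a$.

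The cases (\ref{305}) and (\ref{306}) follow the same template but iterate an additional Cauchy--Schwarz step, since they require controlling cubic and quartic corrections. The odd parity of $\widetilde F_4'$ reduces $A_4$ to $\int x^3 g$, which after a refined half-interval estimate produces the quadratic correction $-3(A_{1,1}^+)^2$ in terms of an auxiliary positive-half moment $A_{1,1}^+$ to be defined in the appendix. The even parity of $\widetilde F_5'$ reduces $A_5$ to $a$ together with corrections in $\int x^2(1-x^2)g$ and $\int x^4(1-x^2)g$, each controlled by successive applications of weighted half-interval Cauchy--Schwarz yielding the terms $(a_+^2+a_-^2)/(a+1)$ and $(a_+^3+a_-^3)/(a+1)^2$.

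The principal obstacle is pinpointing the correct Cauchy--Schwarz weight at each step so that the universal denominator $(a+1)$ and the $\lambda$-dependent numerator match the stated bounds; the iterated Cauchy--Schwarz computations for (\ref{305}) and (\ref{306}), together with the careful bookkeeping of the mean-zero coupling between the two halves, are presumably the reason these technical calculations are deferred to the appendix.
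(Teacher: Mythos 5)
Your high-level plan — write $\widetilde F_k'$ as an explicit polynomial, expand $A_k$ in moments of $g$, use the constraints $\int g=1$, $\int xg=0$, $\int(1-x^2)g=a$, and then apply Cauchy--Schwarz on the two half-intervals to bring in $a_\pm$ — is indeed the route the paper takes, and your explicit formulas for $\widetilde F_2',\dots,\widetilde F_5'$ are correct. The $A_3\ge -a/6$ branch from $\min\widetilde F_3'=-\frac16$ is also exactly right. However, the Cauchy--Schwarz steps you propose are not the ones that produce the stated bounds, and the missing idea is genuinely the heart of the lemma.

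For \eqref{303}, the split $x(1-x^2)g=\bigl[(1-x^2)\sqrt g\,\bigr]\bigl[x\sqrt g\,\bigr]$ you suggest gives
$(A_{1,1}^+)^2\le\bigl(\int_0^1(1-x^2)^2g\bigr)\bigl(\int_0^1 x^2g\bigr)\le a_+\bigl(\tfrac{a+1}{2}-a_+\bigr)$,
which is weaker than $a_+^2\bigl(1-\tfrac{2a_+}{a+1}\bigr)$ by a factor of roughly $\tfrac{a+1}{2a_+}>1$. The paper instead splits $x(1-x^2)g=\bigl[x\sqrt{(1-x^2)g}\,\bigr]\bigl[\sqrt{(1-x^2)g}\,\bigr]$ to get $(A_{1,1}^+)^2\le A_{2,1}^+\,a_+$, and then bounds $A_{2,1}^+=\int_0^1x^2(1-x^2)g$ by a \emph{second} Cauchy--Schwarz in which the bounded quantity reappears on the right:
\begin{equation*}
(A_{2,1}^+)^2\le\Bigl(\int_0^1(1-x^2)^2g\Bigr)\Bigl(\int_0^1x^4g\Bigr)\le\bigl(a_+-A_{2,1}^+\bigr)\Bigl(\tfrac{a+1}{2}-a_+-A_{2,1}^+\Bigr).
\end{equation*}
Expanding cancels the quadratic term and yields the linear bound $A_{2,1}^+\le a_+-\tfrac{2a_+^2}{a+1}$, which is where the denominator $a+1$ actually comes from (via the mass bound $\int_0^1 g\le\tfrac{1+a}{2}$, which you derive from $\int xg=0$). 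This ``implicit'' Cauchy--Schwarz, solved as an inequality in the unknown moment, is the device that is iterated throughout the lemma, and your proposal does not identify it. The same device underlies \eqref{305} (via $A_{3,1}^+\ge (A_{1,1}^+)^2/\int_0^1xg\ge 2(A_{1,1}^+)^2$ using $\int_0^1 xg\le\tfrac12$) and \eqref{306} (via $A_{2,2}^+\ge (A_{2,1}^+)^2/\int_0^1x^2g$ combined with the bound on $A_{2,1}^+$ above), neither of which you pin down. So the strategy is right, but the proposal as written does not close the estimates and the particular Cauchy--Schwarz you offer for $A_2$ would not give \eqref{303}.
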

\begin{remark}\label{rmk32}
    It is easy to see that the estimate of $|A_2|$ \eqref{303} becomes the worst when $\lambda=1$; while the estimate of $|A_3|$ \eqref{304} becomes the worst when $\lambda=\frac{1}{2}$. The same is true for $|A_4|$ and $|A_5|$ provided that $a$ is suitably small. More generally, in \eqref{301} and \eqref{302}, one can easily show that the estimates become the worst when $\lambda=1$ if $k$ is even, and when $\lambda=\frac{1}{2}$ if $k$ is odd. In fact, we can say more about this. See Lemma \ref{Lemma4.1} below.
\end{remark}

Now we derive some estimates about $g$. By definition, $g=(1-x^2)\frac{e^{4u}}{\gamma}$,
\begin{equation*}
\int_{-1}^{1}g=1,\ \int_{-1}^{1}xg=0\text{ and }\int_{-1}^{1}(1-x^2)g=a.
\end{equation*}

From the second integral in the above, we have
\begin{equation}\label{307}
\int_{0}^{1}g-\int_{0}^{1}(1-x)g=\int_{0}^{1}xg=-\int_{-1}^{0}xg=\int_{-1}^{0}g-\int_{-1}^{0}(1+x)g.
\end{equation}

Since
\begin{equation*}
\left|\int_{0}^{1}(1-x)g\right|\leq\int_{0}^{1}(1-x^2)g=a_+,\ \left|\int_{-1}^{0}(1+x)g\right|\leq\int_{0}^{1}(1-x^2)g=a_-,
\end{equation*}
we have
\begin{equation*}
    \left|\int_{0}^{1}g-\int_{-1}^{0}g \right|\le a,
\end{equation*}
hence
\begin{equation}\label{308}
\frac{1-a}{2}\leq\int_{0}^{1}g,\int_{-1}^{0}g\leq \frac{1+a}{2}.
\end{equation}
Moreover,
\begin{equation}\label{309}
    \int_{0}^{1}xg\le \min\{\int_{0}^{1}g,\int_{-1}^{0}g\}\le\frac{1}{2},
\end{equation}
and
\begin{equation}\label{N310}
    \int_0^1 (1+x)g=1-\int_{-1}^0 (1+x)g<1.
\end{equation}

To prove Theorem \ref{A}, we need some point-wise estimates on $\Tilde{F}_k'=\frac{24}{\lambda_k(\lambda_k+2)}C_{k-1}^{\frac{5}{2}}$. The following Lemma gives us the asymptotic behavior of Gegenbauer polynomials.

\begin{lemma}[Corollary 5.3 of Nemes and Daalhuis \cite{Nemes2020} ]\label{lem33}
Let $0<\zeta<\pi$ and $N\geq 2$ be an  integer. Then
\begin{equation}\label{310}
C_{k-1}^{\frac{5}{2}}(\cos{\zeta})=\frac{2}{\Gamma(\lambda)(2\sin{\zeta})^\frac{5}{2}}\left(\sum_{n=0}^{N-1}t_n(2)\frac{\Gamma(k+4)}{\Gamma(k+n+\frac{5}{2})}\frac{\cos{(\delta_{k-1,n})}}{\sin^n{\zeta}}+R_N(\zeta,k-1)\right),
\end{equation}
where $\delta_{k,n}=(k+n+\frac{5}{2})\zeta-(\frac{5}{2}-n)\frac{\pi}{2}$, $t_n(\mu)=\frac{(\frac{1}{2}-\mu)_n(\frac{1}{2}+\mu)_n}{(-2)^n n!}$, and $(x)_n=\frac{\Gamma(x+n)}{\Gamma(x)}$ is the Pochhammer symbol. The remainder term $R$ satisfies the estimate
\begin{equation}\label{311}
|R_N(\zeta,k)|\leq |t_N(2)|\frac{\Gamma(k+5)}{\Gamma(k+N+\frac{7}{2})}\frac{1}{\sin^N{\zeta}}\cdot
\begin{cases}
|\sec{\zeta}|&\text{ if }0<\zeta\leq\frac{\pi}{4}\text{ or }\frac{3\pi}{4}\leq\zeta<\pi,\\
2\sin{\zeta}&\text{ if }\frac{\pi}{4}<\zeta<\frac{3\pi}{4}.
\end{cases}
\end{equation}
\end{lemma}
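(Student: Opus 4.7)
The statement is Corollary 5.3 of \cite{Nemes2020}, whose proof is the substance of that paper. My plan here is to sketch a route, not reconstruct the full argument: the idea is a quantitative method of steepest descent that yields both the truncated expansion and an effective bound on the tail. Every subsequent step in the present paper uses this lemma as a black box, so a proof proposal amounts to identifying where each of the pieces $(2\sin\zeta)^{-5/2}$, $t_n(2)$, $\Gamma(k+4)/\Gamma(k+n+5/2)$, $\cos(\delta_{k-1,n})$ and the piecewise remainder factor originates.

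First I would begin from a Laplace-type contour integral for $C_{k-1}^{5/2}(\cos\zeta)$, obtained either from the generating function
\[
\sum_{n=0}^\infty C_n^{5/2}(x)\,z^n=(1-2xz+z^2)^{-5/2}
\]
via Cauchy's formula, or equivalently from a Mehler--Dirichlet type integral specialised to the Jacobi parameters $\alpha=\beta=2$.  The amplitude $(1-2xz+z^2)^{-5/2}$ has two complex-conjugate branch points at $z=e^{\pm i\zeta}$, which after a contour deformation become the two saddle points of the phase.  Expanding the amplitude in a finite Taylor series of length $N$ around each saddle produces exactly the coefficients $t_n(2)=(1/2-2)_n(1/2+2)_n/((-2)^n n!)$: the factor $(1/2+2)_n/n!$ is the $n$-th Taylor coefficient of a power of $(1-w)^{-1/2-2}$ appearing in the local analysis at one saddle, and $(1/2-2)_n/(-2)^n$ arises from the conjugate contribution.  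Integrating term-by-term against the $z^{-k}$ kernel yields the ratio $\Gamma(k+4)/\Gamma(k+n+5/2)$ and the overall prefactor $2/(\Gamma(5/2)(2\sin\zeta)^{5/2})$, while pairing the two conjugate saddle contributions collapses the oscillations into the real cosine $\cos(\delta_{k-1,n})$ with the prescribed phase $\delta_{k-1,n}=(k-1+n+5/2)\zeta-(5/2-n)\pi/2$.

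The remainder $R_N$ is then a residual integral of the same type but with the amplitude replaced by its $N$-th Taylor remainder.  Pulling the modulus inside that integral produces a principal factor $|t_N(2)|\,\Gamma(k+5)/\Gamma(k+N+7/2)\,\sin^{-N}\zeta$ together with a geometric constant measuring the distance from the deformed contour to the remaining singularities.  On the central range $\pi/4<\zeta<3\pi/4$ the contour can be kept at distance $\gtrsim\sin\zeta$ from the branch points, which yields the factor $2\sin\zeta$; on the end ranges $0<\zeta\le\pi/4$ and $3\pi/4\le\zeta<\pi$ the saddles approach a branch point and one must pay the weaker $|\sec\zeta|$ to skirt it.  The main obstacle is precisely this last geometric step: obtaining sharp piecewise constants with the correct transition at $\zeta=\pi/4,\,3\pi/4$ requires the hyperterminant machinery developed in the Nemes--Daalhuis series, which supplies a clean monotonicity for successive steepest-descent remainders.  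Without that tool one still recovers the full asymptotic expansion and a qualitatively correct tail bound, but loses either the explicit constant $2$ or the sharp two-regime split on which our later estimates on $A_k$ depend.
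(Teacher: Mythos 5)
This lemma is not proved in the paper; it is quoted directly from Corollary~5.3 of Nemes and Daalhuis \cite{Nemes2020} and used as a black box, so there is no in-paper proof to compare your proposal against. Your sketch is candid about this and gives a plausible heuristic for the underlying mechanism: a contour-integral representation, conjugate saddle contributions near $e^{\pm i\zeta}$ producing the oscillatory cosines, a truncated amplitude expansion yielding the $t_n$ coefficients and the $\Gamma$-ratio factors, and a residual integral whose geometry accounts for the piecewise $\sec\zeta$ versus $2\sin\zeta$ remainder factor. One caveat on fidelity to the source: Nemes and Daalhuis obtain the Gegenbauer case by passing to the associated Legendre (Ferrers) function of degree $k+1$ and order $-2$, for which they have a tailored contour-integral representation and an explicit remainder analysis, rather than by applying Cauchy's formula to the Gegenbauer generating function as you propose; the structural ideas overlap, but the bookkeeping and normalisation are different, and they are precisely what produce the sharp explicit constants in the error bound, so the generating-function route would not by itself deliver \eqref{311}. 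Also note a typographical slip in the statement as transcribed here: the $\Gamma(\lambda)$ in the prefactor should read $\Gamma(5/2)$ (in \cite{Nemes2020} the general formula carries a parameter that specialises to $\nu=5/2$ for $C_{k-1}^{5/2}$). Since the paper offers no proof, the only meaningful check is whether the formula is transcribed and specialised correctly; apart from the $\Gamma(\lambda)$ notation, it appears to be.
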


Using the pointwise estimate (\ref{310}),
we can prove the following lower and upper bounds for $\widetilde{F}_k'$.  The proofs are left to  Appendix \ref{appendixB}.

\begin{lemma}\label{lem34}
For all $k\ge 6$, we have
\begin{eqnarray*}
\widetilde{F}_k'\ge -0.081,\quad 0\le x\le 1.
\end{eqnarray*}
\end{lemma}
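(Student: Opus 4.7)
The proof plan is to combine the Nemes--Daalhuis asymptotic expansion from Lemma \ref{lem33} with a direct analysis near the endpoint $x = 1$. Writing $x = \cos\zeta$ with $\zeta \in (0, \pi/2]$ covers $[0, 1)$, and using $\lambda_k(\lambda_k+2) = k(k+1)(k+2)(k+3)$, one has
$$\widetilde{F}_k'(\cos\zeta) = \frac{24}{k(k+1)(k+2)(k+3)}\,C_{k-1}^{5/2}(\cos\zeta).$$

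First, I would apply Lemma \ref{lem33} with $N = 2$ to decompose
$$\widetilde{F}_k'(\cos\zeta) = S_{k,0}(\zeta) + S_{k,1}(\zeta) + E_k(\zeta),$$
where $S_{k,0}$ and $S_{k,1}$ are the leading and first-correction oscillating terms (each an explicit function of $k$ and $\zeta$ times $\cos(\delta_{k-1,n})$) and $E_k$ is governed by \eqref{311}. Bounding the cosines by $1$ in absolute value and controlling the ratios of Gamma functions by explicit monotone inequalities (rather than by leading-order Stirling asymptotics), this reduces the desired bound to a concrete $k$-dependent inequality in $\sin\zeta$, which becomes increasingly favorable as $k\sin\zeta$ grows.

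Second, I would split $[0, 1]$ into an \emph{interior} piece $\{\cos\zeta : \zeta \geq \zeta_k\}$ and a \emph{boundary layer} $\{\cos\zeta : 0 < \zeta < \zeta_k\}$ for a threshold $\zeta_k$ chosen so that $k\sin\zeta_k$ is bounded below by a suitable constant. On the interior piece, the bound from the first step yields $|\widetilde{F}_k'| \leq 0.081$ outright. On the boundary layer, where the asymptotic loses force, I would use the endpoint data $\widetilde{F}_k'(1) = 1$ and $(\widetilde{F}_k')'(1) = k-1$ (computed from $\frac{d}{dx}C_{k-1}^{5/2} = 5\,C_{k-2}^{7/2}$ and $C_j^\nu(1) = \binom{j+2\nu-1}{j}$), together with the Gegenbauer ODE, to show that $\widetilde{F}_k'(x) > 0$ throughout this layer, so that the required lower bound holds trivially there.

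The main obstacle is that $-0.081$ is essentially sharp: the leading-order amplitude at the first interior minimum of $\widetilde{F}_k'$ is close to $0.081$ in magnitude for small $k$ (the first minimum lies near $\zeta \approx 9\pi/(4(k+3/2))$, where for $k = 6$ the leading-term amplitude is already around $0.09$). The estimates therefore have very little slack: one must carry at least the $N = 2$ expansion, control the Gamma ratios by explicit rather than asymptotic bounds, and tune $\zeta_k$ so that the asymptotic region and the boundary-layer region overlap for every $k \geq 6$. For the very smallest $k$ in the range (say $k = 6, 7, 8$) it may be cleanest to verify the bound by direct inspection of the explicit low-degree polynomial $\widetilde{F}_k'$, reserving the asymptotic argument for all $k$ above some modest $k_0$.
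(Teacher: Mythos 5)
Your plan starts from the same tool as the paper (Lemma~\ref{lem33} with $N=2$), but the two-region scheme you propose cannot be tuned to close, and the gap is not a matter of tightening constants. Write $l = k\sin\zeta$. If on the interior piece you discard the phases by bounding the cosines by $1$, the expansion gives roughly
$|\widetilde{F}_k'| \lesssim 8\sqrt{2/\pi}\, l^{-5/2}\bigl(1 + \tfrac{15}{8l}\bigr) + 8\sqrt{2/\pi}\,\tfrac{15}{8 l^{9/2}}$,
which falls below $0.081$ only for $l \gtrsim 6.4$. On the other hand, positivity of $\widetilde{F}_k'$ near $x=1$ fails already at $l \approx j_{2,1} \approx 5.14$ (the Hilb asymptotic puts the first zero of $C^{5/2}_{k-1}(\cos\zeta)$ near $\zeta \approx j_{2,1}/(k+\tfrac32)$), while the global minimum itself sits near $l \approx j_{3,1} \approx 6.38$. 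So over the whole band $5.14 \lesssim l \lesssim 6.4$ your function is already negative but your interior estimate gives nothing better than about $0.12$--$0.17$: there is no choice of $\zeta_k$ for which the two halves of your argument overlap. The constant $0.081$ in this lemma is not achievable by any bound that throws away the cosine phases.

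The paper resolves this by a different structure. It first invokes Lemma~\ref{lemA1} (and Lemma~\ref{lemA2}) to identify the \emph{location} of the minimizer of $\widetilde{F}_k'$ on $[0,1]$ as the largest zero $x_{k-2,1}(7/2)$ of $C^{7/2}_{k-2}$, which satisfies $l \geq \sqrt{18}$; the lower bound then only needs to be verified at that single point. It then retains the two oscillatory terms, Taylor-expanding the phases $\delta_{k-1,0},\delta_{k-1,1}$ about the limiting values $l-\tfrac{5\pi}{4}$, $l-\tfrac{3\pi}{4}$, and splits $l$ into several sub-ranges ($\sqrt{18}\le l\le 5$, $5<l\le 5.6$, $5.6<l<6.8$, $l\ge 6.8$) in which the signs and sizes of $\cos(l-\tfrac{\pi}{4})$ and $\cos(l+\tfrac{\pi}{4})$ can be exploited; this phase cancellation is exactly what rescues the range $5.14\lesssim l\lesssim 6.4$. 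Two secondary inaccuracies in your write-up: the value $(\widetilde{F}_k')'(1)$ is $\tfrac16(k+4)(k-1)$, not $k-1$ (from $\tfrac{d}{dx}C^{5/2}_{k-1}=5C^{7/2}_{k-2}$ and $C^{7/2}_{k-2}(1)=\binom{k+4}{6}$), and the numerical range that must be handled directly is far larger than $k=6,7,8$ --- the paper checks $6\le k\le 50$ by machine and uses the asymptotics only for $k>50$.
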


\begin{lemma}\label{lem35}
Let $d=10$ and $b=0.11$. Then for all $k\ge 6$,
\begin{eqnarray*}
\widetilde{F}_k'\le\left\{\begin{aligned} 
&b,\quad &0\le x\le1-\frac{d}{\lambda_k},\\
&1-\frac{\lambda_k}{d}(1-b)(1-x), \quad &1-\frac{d}{\lambda_k}\le x \le 1.
\end{aligned}
\right.
\end{eqnarray*}
\end{lemma}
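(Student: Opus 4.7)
The plan is to split $[0,1]$ into two pieces, the "far" piece $[0,1-d/\lambda_k]$ and the "near" piece $[1-d/\lambda_k,1]$, and establish the two parts of the bound separately. On the far piece I would invoke the Nemes--Daalhuis pointwise asymptotic (Lemma \ref{lem33}). On the near piece I would exploit convexity of $\widetilde{F}_k'$ to bootstrap the far-region estimate through the interface $x=1-d/\lambda_k$.

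For the far region, write $x=\cos\zeta$ with $\zeta\in[\zeta_0,\pi/2]$, where $\cos\zeta_0=1-d/\lambda_k$. Then $\sin^2\zeta\ge(d/\lambda_k)(2-d/\lambda_k)\ge d/\lambda_k$, so $\sin^{-5/2}\zeta\le(\lambda_k/d)^{5/4}$. Applying Lemma \ref{lem33} with a small fixed $N$ (say $N=2$), the leading term of $C_{k-1}^{5/2}(\cos\zeta)$ is bounded pointwise by $\frac{2\Gamma(k+4)}{\Gamma(5/2)(2\sin\zeta)^{5/2}\Gamma(k+5/2)}$ after using $|\cos(\delta_{k-1,0})|\le 1$, and the remainder by \eqref{311}. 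Since $\Gamma(k+4)/\Gamma(k+5/2)\sim k^{3/2}$ by Stirling while the prefactor $24/[\lambda_k(\lambda_k+2)]$ decays like $k^{-4}$, the overall bound on $\widetilde{F}_k'$ is $O(d^{-5/4})$; with $d=10$ and careful tracking of constants this is at most $b=0.11$ for all $k\ge 6$. Any small $k$ for which the asymptotic is not yet sharp enough can be checked directly from the explicit formula for $C_{k-1}^{5/2}$.

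For the near region, differentiate twice using \eqref{201} to get $(\widetilde{F}_k')''=\frac{840}{\lambda_k(\lambda_k+2)}C_{k-3}^{9/2}$. The key claim is that the largest positive zero $x_1$ of $C_{k-3}^{9/2}$ satisfies $x_1<1-d/\lambda_k$ for every $k\ge 6$. Asymptotically this follows from $1-x_1\sim j_{4,1}^2/[2(k+\tfrac{3}{2})^2]$, where $j_{4,1}\approx 7.588$ is the first positive zero of $J_4$, giving $\lambda_k(1-x_1)\to j_{4,1}^2/2\approx 28.8>10=d$; the finitely many small $k$ values admit direct numerical check (for instance $k=6$ gives $C_3^{9/2}(x)=214.5\,x^3-49.5\,x$, whose largest positive zero is $\sqrt{49.5/214.5}\approx 0.48<0.815=1-d/\lambda_6$). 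Hence $C_{k-3}^{9/2}\ge 0$ on $[1-d/\lambda_k,1]$, so $\widetilde{F}_k'$ is convex there. Combining convexity with the endpoint values $\widetilde{F}_k'(1)=1$ and $\widetilde{F}_k'(1-d/\lambda_k)\le b$ (from the far region), the chord inequality yields
$$\widetilde{F}_k'(x)\le 1-\frac{\lambda_k}{d}(1-x)\bigl(1-\widetilde{F}_k'(1-d/\lambda_k)\bigr)\le 1-\frac{\lambda_k}{d}(1-b)(1-x),$$
which is the desired bound.

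The principal obstacle is the bookkeeping in the far-region argument: the coefficients $b=0.11$ and $d=10$ are chosen precisely so that the leading asymptotic term, the remainder estimate \eqref{311}, and the gamma-function ratios combine to give the stated ceiling uniformly in $k\ge 6$. The convexity argument in the near region is, by contrast, essentially automatic once the far-region estimate and the location of the largest zero of $C_{k-3}^{9/2}$ are in hand.
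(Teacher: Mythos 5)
Your near‐region argument is essentially sound and is a clean variant of the paper's: you compute $(\widetilde F_k')''=\tfrac{840}{\lambda_k(\lambda_k+2)}C_{k-3}^{9/2}$ and show its largest zero lies left of the interface $1-d/\lambda_k$ (via Lemma~\ref{lemA1}), whereas the paper appeals to Lemma~\ref{lemA2}(c) together with the two–sided bound $0.081\le\widetilde F_k'(1-d/\lambda_k)\le 0.11$ to locate $1-d/\lambda_k$ beyond the last zero of $C_{k-1}^{5/2}$; both routes then conclude by the chord inequality.

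The gap is in the far region. Your plan is to bound $\widetilde F_k'$ \emph{pointwise} over all of $[0,1-d/\lambda_k]$ by inserting $|\cos(\delta_{k-1,n})|\le 1$ into Lemma~\ref{lem33}, claiming that ``careful tracking of constants'' yields $\le b=0.11$. That does not work numerically: at $x=1-d/\lambda_k$ (the worst point for your pointwise estimate, since $\sin^{-5/2}\zeta$ is largest there) one has $\sin\zeta\to\sqrt{2d}/k$ and $k^{5/2}\Gamma(k)/\Gamma(k+\tfrac52)\to 1$, so the prefactor $8\sqrt{2/\pi}/(2d)^{5/4}\approx 0.151$ multiplies the bracket $\cos\delta_{k-1,0}+\tfrac{15}{8\sqrt{2d}}\cos\delta_{k-1,1}$, which with the crude bound $|\cos|\le1$ is $\approx 1.42$; this gives roughly $0.21$ plus remainder, double what is needed. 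The paper avoids this by \emph{not} attempting a uniform pointwise bound on the far region. Instead it evaluates $\widetilde F_k'$ only at the single interface point $1-d/\lambda_k$, where $\delta_{k-1,0}\approx 2\sqrt5-\tfrac{5\pi}{4}$ and $\delta_{k-1,1}\approx 2\sqrt5-\tfrac{3\pi}{4}$, so $\cos\delta_{k-1,0}\approx 0.855$ and $\cos\delta_{k-1,1}\approx -0.52$; the partial cancellation brings the bracket down to $\approx 0.64$ and the total to $\approx 0.10<0.11$. It then transports this one‐point bound to the entire far interval using Lemma~\ref{lemA2}(b) (monotone decay of local extrema of $|C_n^\nu|$) combined with Lemma~\ref{lem34}: since all interior local extrema have absolute value $\le 0.081\le \widetilde F_k'(1-d/\lambda_k)$, the interface value dominates $\widetilde F_k'$ on $[0,1-d/\lambda_k]$. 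Without either (i) exploiting the specific phase of the cosines at the interface, or (ii) switching to the one‐point‐plus‐monotonicity structure, the far‐region half of your proof does not reach $b=0.11$.
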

 
 The above two lemmas can be illustrated by the following two figures (Figure 1 and Figure 2): $\widetilde{F}_k'$ decays rapidly away from $1$ and its minimum is very small.

\begin{figure}[htbp]
\centering
\begin{minipage}[t]{0.48\textwidth}
\centering
\includegraphics[width=6cm]{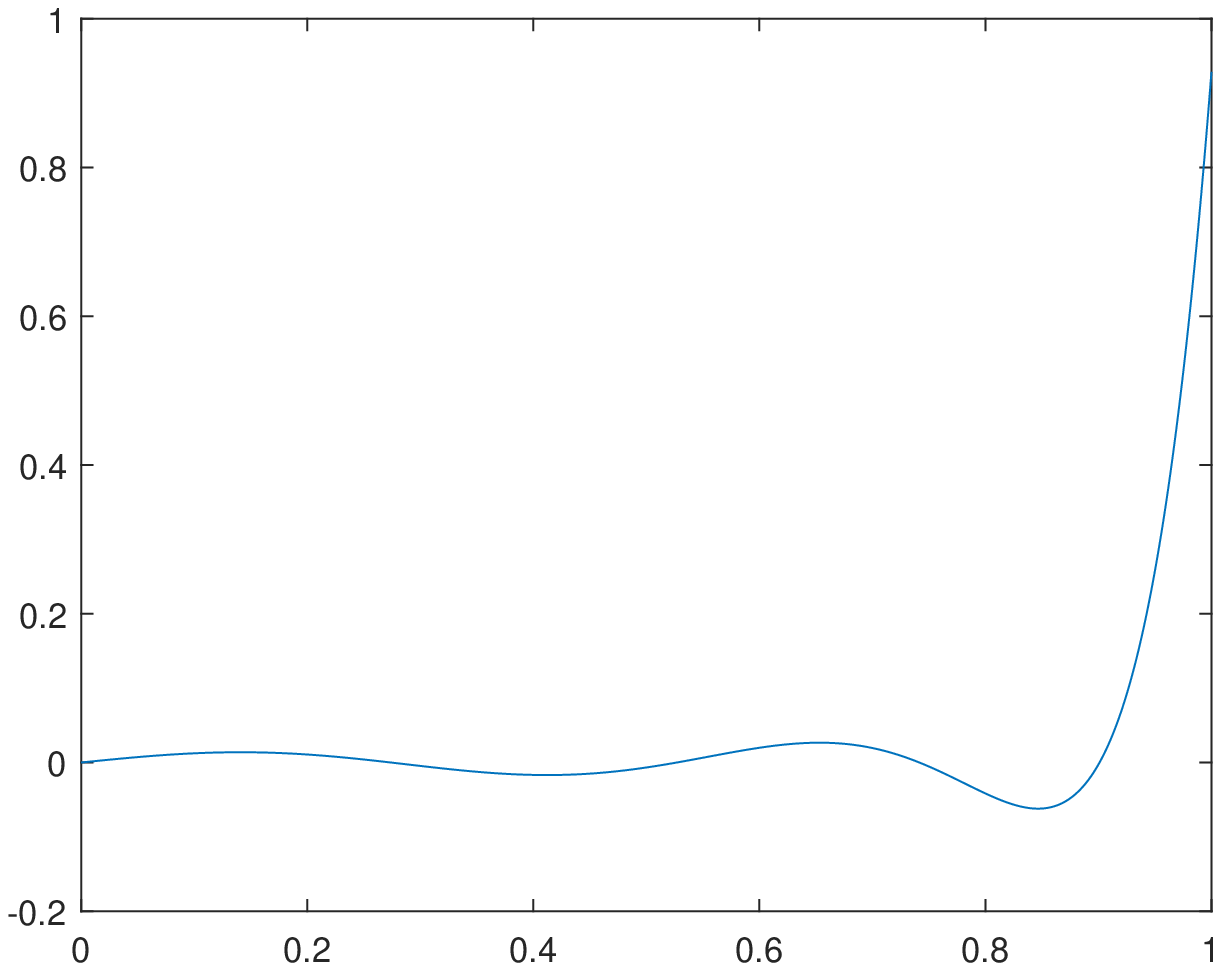}
\caption{Graph of $\widetilde{F}_{10}'$}
\label{figure1}
\end{minipage}
\begin{minipage}[t]{0.48\textwidth}
\centering
\includegraphics[width=6cm]{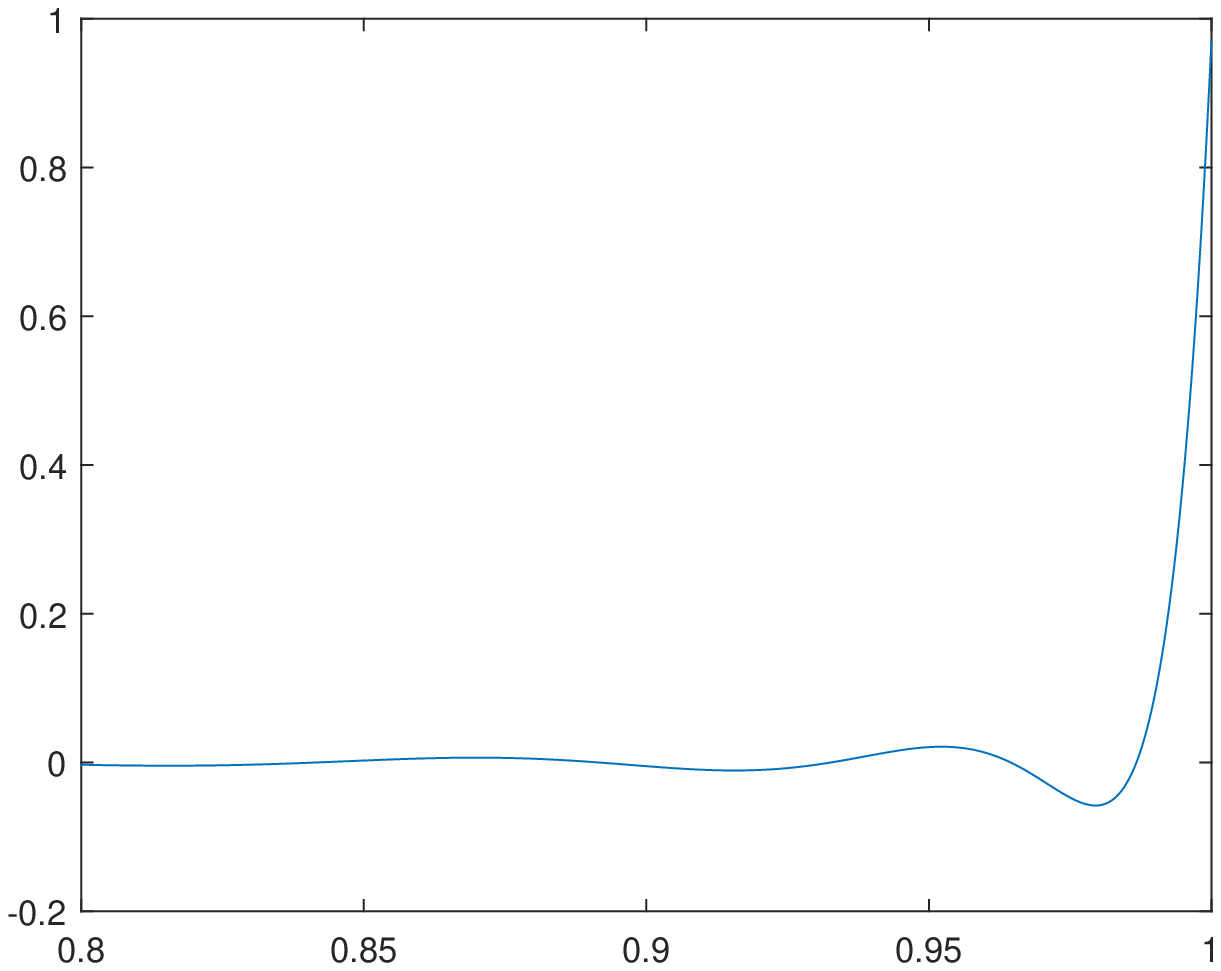}
\caption{$\widetilde{F}_{30}'$ on $(0.8,1)$}
\label{figure2}
\end{minipage}
\end{figure}

With the aid of Lemma \ref{lem34} and Lemma \ref{lem35},  we can prove Theorem \ref{A} and get a refined estimate on $b_k$.

\begin{proof}[Proof of Theorem \ref{A}] By results of \cite{GHX2021}, which is restated in \eqref{403} below, we have  $\beta\geq \frac{16}{13}$ and hence $a<0.31$. It is then straightforward to check that for $2\le k\le 5$, the estimates for $|A_k|$ in Lemma~\ref{lem32} is better than that in Theorem~\ref{A}, so in what follows we may assume $k\ge 6$.

Define $I=(0,1-\frac{d}{\lambda_k})$, $II=(1-\frac{d}{\lambda_k},1)$, and $a_I=\int_I (1-x^2) g$, $a_{II}=\int_{II} (1-x^2) g$. Then there holds
\begin{align*}
    \int_0^1 (1-x^2)\widetilde{F}_k'g&=\int_I (1-x^2)\widetilde{F}_k'g+\int_{II} (1-x^2)\widetilde{F}_k'g\\
    &\le \int_I (1-x^2)b g+\int_{II} (1-x^2)(1-\frac{\lambda_k}{d}(1-b)(1-x))g\\
    &=ba_I+a_{II}-\frac{\lambda_k}{d}(1-b)\int_{II} (1-x^2)(1-x)g\\
    &\le ba_I+a_{II}-\frac{\lambda_k}{d}(1-b)\frac{(\int_{II} (1-x^2)g)^2}{\int_{II} (1+x)g}\\
    &\le ba_I+a_{II}-\frac{\lambda_k}{d}(1-b)a_{II}^2\\
    &=ba_++(1-b)(a_{II}-\frac{\lambda_k}{d}a_{II}^2),
\end{align*}
where we have used \eqref{N310}.

By assumption, $a_{II}\le a_+\le a\le \frac{5}{\lambda_k} =\frac{d}{2\lambda_k}$, so we have
\begin{equation*}
    \int_0^1 (1-x^2)\widetilde{F}_k'g\le a_++(1-b)(a_+-\frac{\lambda_k}{d}a_+^2)=a_+-\frac{\lambda_k}{d}(1-b)a_+^2.
\end{equation*}
On the other hand, by Lemma \ref{lem35},
\begin{equation*}
    \int_0^1 (1-x^2)\widetilde{F}_k'g\ge -0.081\int_0^1 (1-x^2)g=-0.081a_+.
\end{equation*}
Therefore
\begin{equation}\label{312}
    -0.081a_+\le \int_0^1 (1-x^2)\widetilde{F}_k'g\le a_+-\frac{\lambda_k}{d}(1-b)a_+^2.
\end{equation}
Similarly, under the same assumption, if $k$ is odd, then
\begin{equation}\label{313}
    -0.081a_-\le \int_{-1}^0 (1-x^2)\widetilde{F}_k'g\le a_--\frac{\lambda_k}{d}(1-b)a_-^2;
\end{equation}
while if $k$ is even, then
\begin{equation}\label{314}
    -(a_--\frac{\lambda_k}{d}(1-b)a_-^2)\le \int_{-1}^0 (1-x^2)\widetilde{F}_k'g\le 0.081a_-.
\end{equation}
Theorem \ref{A} then follows from \eqref{312}-\eqref{314}.
\end{proof}

\section{proof of Theorem \ref{main}}
In this section,  we will prove Theorem \ref{main} by induction argument, thanks to the better estimates in Theorem \ref{A}.

We claim that $\beta=0$, which yields that $(1-x^2)G$ is constant by \eqref{G'}. Since $G$ is bounded on $(-1,1)$, we get $G\equiv 0$ and we are done.

So it suffices to show that $\beta=0$. We will argue by contradiction. If $\beta\neq 0$, then $0<\beta<\frac{1}{\alpha}$ since $a=\int_{-1}^1 (1-x^2) g=\frac{4}{5}(1-\alpha\beta)>0$. It then suffices to show $a=0$. We will achieve this by proving
 \begin{equation}
 \label{ainduction}
 a=\frac{4}{5}(1-\alpha\beta)\leq \frac{5}{\lambda_n},\ \forall n\geq 3.
 \end{equation}

 As in \cite{GW2000}, we will prove (\ref{ainduction}) by induction.

To begin with, following \cite{GHX2021}, we introduce the following quantity
\begin{equation}
D:=\sum_{k=3}^{\infty}\left[\lambda_k(\lambda_k+2)-(10-\frac{4}{3\alpha})(\lambda_k+2)-\frac{16}{\alpha}\right]b_{k}^2.
\end{equation}

Let $G_2:=\sum\limits_{k=3}^{\infty}a_kF_k(x)$. Then by \eqref{G'} and Lemma \ref{floor}, we get
\begin{align}
D&=\int_{-1}^{1}(1-x^2)[(1-x^2)^2G_2']'''G_2-(10-\frac{4}{3\alpha})\int_{-1}^{1}|((1-x^2)G_2)'|^2-\frac{16}{\alpha}\int_{-1}^{1}G_{2}^{2}\notag\\
&\le \lfloor G\rfloor^2-(10-\frac{4}{3\alpha})\int_{-1}^{1}|((1-x^2)G)'|^2-\frac{16}{\alpha}\int_{-1}^{1}G^2+(36+\frac{8}{\alpha})\beta^2\int_{-1}^{1}F_{1}^{2}\notag\\
&\leq (\frac{16}{3\alpha}-16)\int_{-1}^{1}|((1-x^2)G)'|^2+(36+\frac{8}{\alpha})\frac{4\beta^2}{15}\notag\\
&\leq \frac{16\beta}{15}\left[(9+\frac{2}{\alpha})\beta+(\frac{16}{3\alpha}-16)(5-\frac{1}{\alpha})\right].\label{402}
\end{align}
Since $D\ge 0$, $\alpha\ge \frac{1}{2}$ and $0<\beta<\frac{1}{\alpha}$, we obtain
\begin{equation}\label{403}
    \beta\ge \frac{16}{13}(1-\frac{1}{3\alpha})(5-\frac{1}{\alpha})\ge \frac{16}{13}.
\end{equation}

On the other hand, fix any integer $n\geq 3$, we have
\begin{align}
D&=\sum_{k=3}^{\infty}\left[\lambda_k(\lambda_k+2)-(10-\frac{4}{3\alpha})(\lambda_k+2)-\frac{16}{\alpha}\right]b_{k}^2\notag\\
&\geq\sum_{k=3}^{n}\left[\lambda_k(\lambda_k+2)-(10-\frac{4}{3\alpha})(\lambda_k+2)-\frac{16}{\alpha}\right]b_{k}^2\notag\\
&+(\lambda_{n+1}-10+\frac{4}{5\alpha})\sum_{k=n+1}^{\infty}(\lambda_k+2)b_{k}^{2}\notag\\
&\geq\sum_{k=3}^{n}(\lambda_k-\lambda_{n+1}-\frac{4}{15\alpha})(\lambda_k+2)b_{k}^2+(\lambda_{n+1}-10+\frac{4}{5\alpha})\sum_{k=3}^{\infty}(\lambda_k+2)b_{k}^{2}\notag\\
&=\sum_{k=3}^{n}(\lambda_k-\lambda_{n+1}-\frac{4}{15\alpha})(\lambda_k+2)b_{k}^2+(\lambda_{n+1}-10+\frac{4}{5\alpha})\left[\frac{16\beta}{15}(5-\frac{1}{\alpha})-\frac{8\beta^2}{5}-\frac{8a_{2}^{2}}{7}\right]\label{404}
\end{align}

Combining \eqref{402} and \eqref{404}, we get
\begin{align}\label{405}
0
&\leq \frac{16\beta}{15}(5-\frac{1}{\alpha})(\frac{68}{15\alpha}-6-
\lambda_{n+1})
+\frac{8}{15}\beta^2(3\lambda_{n+1} -12+\frac{32}{5\alpha})\notag\\
&+\frac{8}{15\alpha}(\lambda_2+2)b_2^2+\sum_{k=2}^{n}(\lambda_{n+1}-\lambda_k+\frac{4}{15\alpha})(\lambda_k+2)b_{k}^{2}.
\end{align}

Now we can start the induction procedure. First we rewrite \eqref{405} in terms of $a$ and $\alpha$:
\begin{align}\label{406}
    a(4-5a)(3\lambda_{n+1} -12+\frac{32}{5\alpha})&\le \frac{4(4-5a)}{15\alpha}(-8+136\alpha-180\alpha^2-15\lambda_{n+1}(2\alpha^2-\alpha))\notag\\
    &+6\alpha^2\big(\frac{8}{15\alpha}(\lambda_2+2)b_2^2+\sum_{k=2}^{n}(\lambda_{n+1}-\lambda_k+\frac{4}{15\alpha})(\lambda_k+2)b_{k}^{2}\big).\notag\\
\end{align}

By \eqref{403}, we have $a=\frac{4}{5}(1-\alpha\beta)<0.31$. When $n=3$, we apply Lemma ~\ref{lem32} to \eqref{406}, as mentioned in Remark \ref{rmk32}, we may take $\lambda=1$ in \eqref{303} and $\lambda=\frac{1}{2}$ in \eqref{304}. Direct computation then shows that $a<\frac{1}{8}=\frac{5}{\lambda_5}$. So (\ref{ainduction}) holds for $n=3$.

By induction, we now suppose that $a\leq \frac{5}{\lambda_n}$ for some $n\geq 3$.  To prove that  $a_n \leq \frac{5}{\lambda_{n+1}}$, we assume the contrary, $ a_n > \frac{5}{\lambda_{n+1}}$.    We will  derive a contradiction, which proves  $a\le\frac{5}{\lambda_{n+1}}$.

By Theorem \ref{A}, we have
\begin{align*}
&\sum_{k=2}^{n}(\lambda_{n+1}-\lambda_k+\frac{4}{15\alpha})(\lambda_k+2)b_{k}^{2}\\
\leq& \sum_{\substack{k=2\\ k\text{ odd}}}^{n}(\lambda_{n+1}-\lambda_k+\frac{4}{15\alpha})\frac{2k+3}{2\alpha^2}(a-0.089\lambda_k(2\lambda^2-2\lambda+1)a^2)^2\\
+&\sum_{\substack{k=2\\ k\text{ even}}}^{n}(\lambda_{n+1}-\lambda_k+\frac{4}{15\alpha})\frac{2k+3}{2\alpha^2}((0.081+0.919\lambda)a-0.089\lambda^2\lambda_k a^2)^2\\
=:&\frac{1}{2\alpha^2}f_n(\lambda)a^2.
\end{align*}

The following lemma implies that the worst case happens when $\lambda=1$.

\begin{lemma}
\label{Lemma4.1}
If  $\frac{5}{\lambda_{n+1}}\leq a\leq \frac{5}{\lambda_n}$ for some $n\geq 3$, then we have
\begin{equation*}
f_n(\lambda)\leq f_n(1),\text{ for any } \frac{1}{2}\leq \lambda \leq 1.
\end{equation*}
\end{lemma}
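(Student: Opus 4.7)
The plan is to reduce $f_n(\lambda) \le f_n(1)$ to a comparison of two weighted sums by exploiting the fortunate coincidence that the two parity templates agree at $\lambda = 1$. Since $\psi(1) := 2\cdot 1 - 2\cdot 1 + 1 = 1$ and $0.081 + 0.919 = 1$, one checks directly that $T_k^{\mathrm{odd}}(1) = T_k^{\mathrm{even}}(1) = (1 - 0.089\lambda_k a)^2$, where $T_k^{\mathrm{odd}}(\lambda) := (1 - 0.089\lambda_k(2\lambda^2 - 2\lambda + 1)a)^2$ and $T_k^{\mathrm{even}}(\lambda) := ((0.081 + 0.919\lambda) - 0.089\lambda^2\lambda_k a)^2$. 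Writing $C_k := (\lambda_{n+1} - \lambda_k + \tfrac{4}{15\alpha})(2k+3) > 0$ and $d_k := 0.089\lambda_k a$, which lies in $[0, 0.445]$ thanks to the hypothesis $a \le 5/\lambda_n$ and $k \le n$, I would apply the difference-of-squares identity to both cases. Using $\psi(1) - \psi(\lambda) = 2\lambda(1-\lambda)$, the odd difference factors as
$$T_k^{\mathrm{odd}}(1) - T_k^{\mathrm{odd}}(\lambda) = -2\lambda(1-\lambda)\,d_k\,\bigl(2 - d_k(1+\psi(\lambda))\bigr) \le 0,$$
while a direct expansion gives
$$T_k^{\mathrm{even}}(1) - T_k^{\mathrm{even}}(\lambda) = (1-\lambda)\bigl(0.919 - d_k(1+\lambda)\bigr)\bigl(1.081 + 0.919\lambda - d_k(1+\lambda^2)\bigr) \ge 0,$$
where both bracket factors are positive since $d_k \le 0.445$. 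So even-$k$ terms push the sum in the desired direction and odd-$k$ terms push it the wrong way.

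Dividing out the common factor $(1-\lambda)$ (with equality at $\lambda = 1$), the claim reduces to the single inequality
$$\sum_{k \text{ even}} C_k \bigl(0.919 - d_k(1+\lambda)\bigr)\bigl(1.081 + 0.919\lambda - d_k(1+\lambda^2)\bigr) \;\geq\; \sum_{k \text{ odd}} 2\lambda\,C_k d_k\,\bigl(2 - d_k(1+\psi(\lambda))\bigr).$$
Next I would bound the right side crudely using $2 - d_k(1+\psi) \le 2$ to get $\mathrm{RHS} \le 0.356\,\lambda a\sum_{k \text{ odd}} C_k \lambda_k$, and bound the left side below by its value at the worst-case $\lambda$. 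Since $a \le 5/\lambda_n$ translates (using $\lambda_k = k(k+3)$) to the heuristic $\sum_k C_k \lambda_k \lesssim \lambda_n \sum_k C_k$, the required inequality then reduces to a closed-form comparison of two polynomials in $k$, verifiable by direct summation.

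The main obstacle is the tightness of the bounds when $k = n$ and $a$ approaches its upper limit $5/\lambda_n$: then $d_n$ approaches $0.445$ and the factor $0.919 - d_n(1+\lambda)$ can become as small as $0.029$, leaving very little room. I would overcome this by isolating the $k = n$ term and using the sharper estimate $d_k = 0.089\lambda_k a \le 0.445\,\lambda_k/\lambda_n$ for $k < n$, which is substantially smaller than $0.445$ when $k$ is away from $n$, so that the bulk of the left-hand sum enjoys a comfortable lower bound. The final step is a careful summation that handles large $n$ via an asymptotic estimate (noting $\sum_k C_k \sim n^4$ while $\sum_k C_k \lambda_k \sim n^6$, so their ratio is exactly $O(\lambda_n)$, which matches the constraint $a\lambda_n \le 5$) together with a direct numerical check for small $n$.
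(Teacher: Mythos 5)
Your factorizations are correct. Since $\psi(1)=1$ and $0.081+0.919=1$, both parity templates reduce to $(1-d_k)^2$ at $\lambda=1$; the difference-of-squares step gives
$T_k^{\mathrm{odd}}(1)-T_k^{\mathrm{odd}}(\lambda)=-2\lambda(1-\lambda)\,d_k\bigl(2-d_k(1+\psi(\lambda))\bigr)\le 0$ and
$T_k^{\mathrm{even}}(1)-T_k^{\mathrm{even}}(\lambda)=(1-\lambda)\bigl(0.919-d_k(1+\lambda)\bigr)\bigl(1.081+0.919\lambda-d_k(1+\lambda^2)\bigr)\ge 0$,
and the sign analysis is sound because $d_k\le 0.089\lambda_k a\le 0.089\cdot 5=0.445$. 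Dividing by $(1-\lambda)$ is valid. This is a genuinely different route from the paper, which instead expands $f_{2n+1}(\lambda)$ explicitly as a quartic in $\lambda$ with closed-form coefficients $S_1,\dots,S_6$ (polynomials in $n$ and $1/\alpha$), differentiates, and proves $f_{2n+1}'(\lambda)>0$ on $[\tfrac12,1]$ by substituting the endpoint values $a\le 5/\lambda_n$ and $a\ge 5/\lambda_{n+1}$ into the negative and positive terms respectively. Your finite-difference factorization avoids the derivative bookkeeping and is arguably cleaner conceptually.

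However, there is a genuine gap: you stop exactly where the work begins. "Bound the left side below by its value at the worst-case $\lambda$," "careful summation," "asymptotic estimate for large $n$," "numerical check for small $n$" is a plan, not a proof, and the quantitative heart of the lemma is left unverified. Moreover, the worst-case $\lambda$ is not the transparent issue you treat it as: on the even side $0.919-d_k(1+\lambda)$ decreases in $\lambda$ while $1.081+0.919\lambda-d_k(1+\lambda^2)$ increases (its $\lambda$-derivative $0.919-2d_k\lambda>0$ because $d_k\le 0.445$), and on the odd side the prefactor $2\lambda$ increases while $2-d_k(1+\psi(\lambda))$ decreases. Minimizing the left side and maximizing the right side over $\lambda$ independently is lossy, and precisely in the tight regime you flag ($a\approx 5/\lambda_n$, $k\approx n$, where the even bracket shrinks to $\approx 0.029$) there may not be room for that loss. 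The paper keeps $\lambda$ symbolic throughout and bounds the $\lambda$-coefficients uniformly, which avoids this. Your heuristic $\sum_k C_k\lambda_k\lesssim\lambda_n\sum_k C_k$ is also not a bound: both sides are the same order $n^6$, so only the explicit constants (which you never compute) can decide the inequality. In short, the algebraic setup is valid and parallel in spirit to the paper's, but the proposal defers the entire verification step that constitutes the paper's proof.
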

\begin{proof}
We first assume $n$ is odd and relabel $n$ to be $2n+1$, $n\geq 1$. By direct computation, we have
\begin{align*}
f_{2n+1}(\lambda)=
&\sum_{m=1}^{n}\left[(\lambda_{2n+2}-\lambda_{2m}+\frac{4}{15\alpha})(4m+3)((0.081+0.919\lambda)-0.089\lambda^2\lambda_{2m} a)^2\right.\\
&\left.+(\lambda_{2n+2}-\lambda_{2m+1}+\frac{4}{15\alpha})(4m+5)(1-0.089\lambda_{2m+1}(2\lambda^2-2\lambda+1)a)^2\right]\\
&=(0.081+0.919\lambda)^2 S_1+S_2-0.178\lambda^2(0.081+0.919\lambda)a S_3\\
&-0.178(2\lambda^2-2\lambda+1)a S_4+0.089^2\lambda^4 a^2 S_5+0.089^2(2\lambda^2-2\lambda+1)^2a^2S_6,&
\end{align*}
where $S_i,i=1,...,6$ are given by
\begin{equation*}
S_1=\sum_{m=1}^{n}\left(\lambda_{2n+2}-\lambda_{2m}+\frac{4}{15\alpha}\right)(4m+3)=4 n^4+28 n^3+\left(\frac{8}{15 \alpha }+59\right)n^2+\left(\frac{4}{3 \alpha }+35\right)n,
\end{equation*}

\begin{align*}
S_2
&=\sum_{m=1}^{n}\left(\lambda_{2n+2}-\lambda_{2m+1}+\frac{4}{15\alpha}\right)(4m+5)\\
&=4 n^4+28 n^3+\left(\frac{8}{15 \alpha }+51\right) n^2+\left(\frac{28}{15 \alpha }+7\right) n,
\end{align*}

\begin{align*}
S_3
&=\sum_{m=1}^{n}\left(\lambda_{2n+2}-\lambda_{2m}+\frac{4}{15\alpha}\right)(4m+3)\lambda_{2m}\\
&=
\frac{16 n^6}{3}+56 n^5+\left(\frac{16}{15 \alpha }+\frac{676}{3}\right)n^4+\left(\frac{16}{3 \alpha }+434 \right)n^3\\
&+\left(\frac{124}{15 \alpha }+\frac{1198}{3}\right)n^2+\left(\frac{4}{\alpha }+140\right)n,
\end{align*}

\begin{align*}
S_4
&=\sum_{m=1}^{n}(\lambda_{2n+2}-\lambda_{2m+1}+\frac{4}{15\alpha})(4m+5)\lambda_{2m+1}\\
&=
\frac{16 n^6}{3}+56 n^5+\left(\frac{16}{15 \alpha }+\frac{724}{3}\right)n^4+\left(\frac{112}{15 \alpha }+546 \right)n^3\\
&+\left(\frac{268}{15 \alpha }+\frac{1810}{3}\right)n^2+\left(\frac{84}{5 \alpha }+168\right)n,
\end{align*}

\begin{align*}
S_5
&=\sum_{m=1}^{n}\left(\lambda_{2n+2}-\lambda_{2m}+\frac{4}{15\alpha}\right)(4m+3)\lambda_{2m}^2\\
&=
\frac{32 n^8}{3}+\frac{448 n^7}{3}+\left(\frac{128}{45 \alpha }+848\right)n^6+\left(\frac{64}{3 \alpha }+\frac{7504 }{3}\right)n^5+\left(\frac{2624}{45 \alpha }+4062\right)n^4\\
&+\left(\frac{208}{3 \alpha }+\frac{10528}{3}\right)n^3+\left(\frac{1448}{45 \alpha }+\frac{4138}{3}\right)n^2+\left(\frac{8}{3 \alpha }+140\right)n,
\end{align*}

\begin{align*}
S_6
&=\sum_{m=1}^{n}\left(\lambda_{2n+2}-\lambda_{2m+1}+\frac{4}{15\alpha}\right)(4m+5)\lambda_{2m+1}^2\\
&=
\frac{32 n^8}{3}+\frac{448 n^7}{3}+\left(\frac{128}{45 \alpha }+912\right)n^6+\left(\frac{448}{15 \alpha }+\frac{9520}{3}\right)n^5+\left(\frac{5504}{45 \alpha }+6830\right)n^4\\
&+\left(\frac{1232}{5 \alpha }+\frac{27496}{3}\right)n^3+\left(\frac{11384}{45 \alpha }+\frac{20962 }{3}\right)n^2+\left(\frac{616}{5 \alpha }+1932\right)n.
\end{align*}

Direct computations yield
\begin{align*}
f_{2n+1}'(\lambda)
&\geq 1.838(0.081+0.919\lambda)S_1+S_2-0.178(0.162\lambda+2.757\lambda^2)\frac{5}{\lambda_{2n+1}} S_3\\
&-0.178(4\lambda-2)\frac{5}{\lambda_{2n+1}} S_4+0.178^2\lambda^3 \frac{25}{\lambda_{2n+2}^{2}} S_5\\
&+0.356^2(4\lambda^3-6\lambda^2+4\lambda+1)\frac{25}{\lambda_{2n+2}^{2}}S_6\\
&> 0.
\end{align*}

Hence $f_{2n+1}(\lambda)\leq f_{2n+1}(1)$ and Lemma \ref{Lemma4.1}  is thus proved when $n$ is odd. The case when $n$ is even follows from a similar computation and we omit the details.
\end{proof}

By Lemma \ref{Lemma4.1} above, we infer that
\begin{align*}
&\sum_{k=2}^{n}(\lambda_{n+1}-\lambda_k+\frac{4}{15\alpha})(\lambda_k+2)b_{k}^{2}\\
\leq& \sum_{k=2}^{n}(\lambda_{n+1}-\lambda_k+\frac{4}{15\alpha})\frac{2k+3}{2\alpha^2}(a-0.089\lambda_k a^2)^2\\
\leq& \sum_{k=2}^{n}(\lambda_{n+1}-\lambda_k+\frac{4}{15\alpha})\frac{2k+3}{2\alpha^2}(1-\frac{0.445\lambda_k}{\lambda_{n+1}})^2a^2.
\end{align*}

Expanding the above summation, we get
\begin{align*}
& \sum_{k=2}^{n}(\lambda_{n+1}-\lambda_k+\frac{4}{15\alpha})\frac{2k+3}{2\alpha^2}(1-\frac{0.445\lambda_k}{\lambda_{n+1}})^2\\
&\leq 0.37n^4+0.37n^3+(\frac{0.166}{\alpha}+5.4)n^2+(\frac{0.75}{\alpha}-19.12)n-(\frac{1.35}{\alpha}-17.8)\\
&-(\frac{5.75}{\alpha}-101)\frac{1}{\lambda_{n+1}}-(\frac{0.46}{\alpha}-7.05)\frac{2n^2+10n+17}{\lambda_{n+1}^2}\\
&\leq 0.37n^4+3.7n^3+5.74n^2-17.62n+15.2+\frac{101}{\lambda_{n+1}}+\frac{56}{\lambda_{n+1}^2}.
\end{align*}

Plugging into \eqref{405}, we obtain

\begin{align*}
&0\leq \left[\frac{5}{6\alpha^2}(3\lambda_{n+1}-12+\frac{32}{5\alpha})+\frac{28}{15\alpha^2}(1-\frac{89}{20\lambda_{n+1}})^2\right.\\
&\left.+\frac{1}{2\alpha^2}\left(0.37n^4+3.7n^3+5.74n^2-17.62n+15.2+\frac{101.4}{\lambda_{n+1}}+\frac{56}{\lambda_{n+1}^2}\right)\right]a^2\\
&-\left[\frac{2}{3\alpha}\left(-\frac{8}{3\alpha^2}+\frac{136}{3\alpha}-60\right)+\frac{2}{3\alpha^2}\left(3\lambda_{n+1}-12+\frac{32}{5\alpha}\right)\right]a+\frac{8}{15\alpha}\left(-\frac{8}{3\alpha^2}+\frac{136}{3\alpha}-60\right)\\
&=:h_n(a),
\end{align*}
where $h_n (a)$ is defined at the last equality. Note that $h_n (a)$ is quadratic in $a$ and convex.  To get a contradiction, it suffices to check that the parabola $h_n(a)$ is negative at both $a=\frac{5}{\lambda_{n+1}}$ and $a=\frac{5}{\lambda_n}$.

When $a=\frac{5}{\lambda_{n+1}}$, we have
\begin{align*}
h_n(\frac{5}{\lambda_{n+1}})
&=\left(\frac{37n^4}{8\alpha^2\lambda_{n+1}^2}-\frac{10}{\alpha^2}+\frac{8}{15\alpha}\left(-\frac{8}{3\alpha^2}+\frac{136}{3\alpha}-60\right)\right)+\frac{185n^3}{4\alpha^2\lambda_{n+1}^2}\\
&+\left[\frac{287n^2}{4\alpha^2 \lambda_{n+1}^2}+\left(\frac{40}{\alpha}+\frac{725}{18\alpha^2}-\frac{112}{45\alpha^3}\right)\frac{1}{\lambda_{n+1}}\right]-\frac{881n}{4\alpha^2\lambda_{n+1}^{2}}\\
&+\left[\frac{140}{3\alpha^2}(1-\frac{89}{20\lambda_{n+1}})^2+\left(-\frac{60}{\alpha^2}+\frac{400}{3\alpha^3}\right)\right]\frac{1}{\lambda_{n+1}^{2}}+\frac{2535}{2\alpha^2\lambda_{n+1}^{3}}+\frac{700}{\alpha^2\lambda_{n+1}^{4}}
\end{align*}.

Note that the leading order term is negative (in fact it is less than $-\frac{1}{6}$) and direct computation yields that $h_n(\frac{5}{\lambda_{n+1}})<0$. By similar computations we also derive that $h_n(\frac{5}{\lambda_{n}})<0$.

As a consequence, we get $a\leq \frac{5}{\lambda_n}$ for any $n\geq 3$. Let $n$ tend to infinity, we get $a=0$, which implies that $\frac{1}{\alpha}-\beta=0$, a contradiction.

\appendix
\section{proof of Lemma \ref{lem32}}\label{appendix}
In this appendix, we prove Lemma~\ref{lem32}.

\begin{proof}[Proof of Lemma~\ref{lem32}]
Define $A_{m,n}^+=\int_0^1 x^m(1-x^2)^ng$, $A_{m,n}^-=\int_{-1}^0 |x|^m(1-x^2)^ng$, and $A_{m,n}=A_{m,n}^++A_{m,n}^-$. We begin with the estimate of $A_3$. By definition,
\begin{eqnarray*}
    A_3=\int_{-1}^{1}(1-x^2)g\widetilde{F}_{3}'=\frac{1}{6}\int_{-1}^{1}(1-x^2)(7x^2-1)g=\frac{1}{6}(7A_{2,1}-a).
\end{eqnarray*}
By Cauchy-Schwartz inequality and \eqref{308},
\begin{align*}
    (A_{2,1}^+)^2&\le \int_0^1(1-x^2)^2g \int_0^1 x^4g\\
    &\le (a_+-A_{2,1}^+)(\frac{a+1}{2}-a_+-A_{2,1}^+),
\end{align*}
so \begin{equation}\label{A01}
    A_{2,1}^+\le a_+-\frac{2a_+^2}{a+1}.
\end{equation}
In the same way,
\begin{equation*}
    A_{2,1}^-\le a_--\frac{2a_-^2}{a+1}.
\end{equation*}
Hence,
\begin{equation*}
    A_{2,1}\le a-\frac{2a_+^2+2a_-^2}{a+1}=a-\frac{2a^2}{a+1}(2\lambda^2-2\lambda+1).
\end{equation*}
Therefore
\begin{eqnarray*}
    A_3\le a-\frac{7}{3}\frac{a^2}{a+1}(2\lambda^2-2\lambda+1),
\end{eqnarray*}
which, together with the definition of $A_3$, implies
\begin{eqnarray*}
    |A_3|\le \max\{ a-\frac{7}{3}\frac{a^2}{a+1}(2\lambda^2-2\lambda+1), \frac{a}{6}\}.
\end{eqnarray*}
For $A_2$, we have
\begin{eqnarray*}
    |A_2|=|\int_{-1}^{1}x(1-x^2)g|\leq \max\left\{A_{1,1}^+,A_{1,1}^-\right\}.
\end{eqnarray*}
By Cauchy-Schwartz inequality,
\begin{align*}
(A_{1,1}^+)^2
\leq A_{2,1}^+\int_{0}^{1}(1-x^2)g\le a_+^2-\frac{2a_+^3}{a+1}.
\end{align*}
\begin{align*}
(A_{1,1}^-)^2
\leq A_{2,1}^-\int_{0}^{1}(1-x^2)g\le a_-^2-\frac{2a_-^3}{a+1}.
\end{align*}
Since we have assumed $\lambda\ge \frac{1}{2}$, we conclude that
\begin{eqnarray*}
    |A_2|\le a_+\sqrt{1-\frac{2a_+}{a+1}}.
\end{eqnarray*}
The estimate of $|A_4|$ is similar to that of $|A_2|$. By definition,
\begin{eqnarray*}
    A_4=\int_{-1}^{1}(1-x^2)g\widetilde{F}_{4}'=\frac{1}{2}\int_{-1}^{1}(1-x^2)(3x^2-1)xg=A_{1,1}-\frac{3}{2}A_{3,1}.
\end{eqnarray*}
By Cauchy-Schwartz inequality and \eqref{309},
\begin{align*}
    A_{3,1}\ge \frac{(A_{1,1}^+)^2}{\int_0^1 xg}\ge 2(A_{1,1}^+)^2,
\end{align*}
so
\begin{equation*}
    A_4^+\le A_{1,1}^+-3(A_{1,1}^+)^2
\end{equation*}
On the other hand,
\begin{align*}
    A_4^+\ge \frac{1}{2}\min_{0\le x\le1}\{(3x^2-1)x\}\int_{0}^{1}(1-x^2)g=-\frac{1}{9}a_+.
\end{align*}
In the same way,
\begin{eqnarray*}
   -(A_{1,1}^--3(A_{1,1}^-)^2) \le A_4^-\le \frac{1}{9}a_-
\end{eqnarray*}
Since $\lambda\ge \frac{1}{2}$, we conclude that
\begin{eqnarray*}
    |A_4|\le  A_{1,1}^+-3(A_{1,1}^+)^2+\frac{1}{9}a_-.
\end{eqnarray*}
Finally, for $A_5$, we have
\begin{align*}
    A_5=\frac{1}{16}\int_{-1}^1(1-x^2)(1-18x^2+33x^4)g=\frac{1}{16}(16a-33A_{2,2}-15A_{2,0})
\end{align*}

By Cauchy-Schwartz inequality and \eqref{308},
\begin{align*}
    A_{2,2}^+\ge \frac{(A_{2,1}^+)^2}{\int_0^1 x^2g}\ge \frac{ (A_{2,1}^+)^2}{\frac{a+1}{2}-a_+},
\end{align*}
so by \eqref{A01},
\begin{align*}
    A_5^+&\le \frac{1}{16}\big(16a_+-\frac{ 33(A_{2,1}^+)^2}{\frac{a+1}{2}-a_+}-15(a_+-A_{2,1}^+\big)\\
    &\le \frac{1}{16}\Big(a_+-3(a_+-\frac{2a_+^2}{a+1})(\frac{22a_+}{a+1}-5)\Big)\\
    &=a_+-\frac{6a_+^2}{a+1}+\frac{33a_+^3}{4(a+1)^2}.
\end{align*}
Therefore
\begin{align*}
    A_5\le a-\frac{6(a_+^2+a_-^2)}{a+1}+\frac{33(a_+^3+a_-^3)}{4(a+1)^2}.
\end{align*}
On the other hand,
\begin{align*}
    A_5\ge \frac{1}{16}\min_{-1\le x\le1}\{1-18x^2+33x^4\}\int_{-1}^{1}(1-x^2)g=-\frac{1}{11}a.
\end{align*}
From \eqref{406} and the estimates of $|A_2|$ and $|A_3|$, we can deduce that  $a<0.125$, so now it is not hard to see that
\begin{align*}
    |A_5|\le a-\frac{6(a_+^2+a_-^2)}{a+1}+\frac{33(a_+^3+a_-^3)}{4(a+1)^2}.
\end{align*}
Thus the proof of Lemma \ref{lem32} is completed.
\end{proof}

\section{proof of Lemma \ref{lem34} and Lemma \ref{lem35}}\label{appendixB}
In this appendix we prove Lemma \ref{lem34} and Lemma \ref{lem35}. The proofs are technical and use many quantitative properties of Gegenbauer polynomials.

 Before we prove Lemma~\ref{lem34}, we first state some general lemma about Gegenbauer polynomials.  Denote by $x_{nk}(\nu)$, $k=1,\cdots ,n$, the zeros of $C_n^\nu(x)$ enumerated in decreasing order, that is, $1>x_{n1}(\nu)>\cdots>x_{nn}(\nu)>-1$.

\begin{lemma}[Corollary 2.3 in Area et al.\cite{Area2004}]\label{lemA1}
    For any $n \ge 2$ and for every $\nu\ge 1$, the inequality
\begin{align}
    x_{n1}(\nu)\le \sqrt{\frac{(n-1)(n+2\nu-2)}{(n+\nu-2)(n+\nu-1)}}\cos(\frac{\pi}{n+1})
\end{align}
    holds.
\end{lemma}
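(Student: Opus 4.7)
The plan is to interpret $x_{n1}(\nu)$ as the spectral radius of an explicit Jacobi matrix and then compare it against a constant-off-diagonal reference matrix whose spectrum is computable in closed form. Starting from the standard three-term recurrence $(k+1)C_{k+1}^\nu(x)=2(k+\nu)x\,C_k^\nu(x)-(k+2\nu-1)C_{k-1}^\nu(x)$ and passing to the orthonormal Gegenbauer polynomials $\hat C_k^\nu$, one obtains the symmetrized recurrence $x\hat C_k^\nu=a_{k+1}^\nu\hat C_{k+1}^\nu+a_k^\nu\hat C_{k-1}^\nu$ with
\begin{equation*}
a_k^\nu=\frac{1}{2}\sqrt{\frac{k(k+2\nu-1)}{(k+\nu-1)(k+\nu)}}.
\end{equation*}
The zeros of $C_n^\nu$ are then precisely the eigenvalues of the symmetric $n\times n$ tridiagonal matrix $J_n^\nu$ with zero diagonal and off-diagonal entries $a_1^\nu,\dots,a_{n-1}^\nu$; because the zero set of $C_n^\nu$ is symmetric about the origin, $x_{n1}(\nu)=\rho(J_n^\nu)$.

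Next I would establish that $k\mapsto a_k^\nu$ is increasing when $\nu\ge 1$. Rewriting
\begin{equation*}
(2a_k^\nu)^2=1-\frac{\nu(\nu-1)}{k^2+(2\nu-1)k+\nu(\nu-1)},
\end{equation*}
the nonnegativity of $\nu(\nu-1)$ combined with the monotonicity of the denominator in $k$ gives the claim, so
\begin{equation*}
\max_{1\le k\le n-1}a_k^\nu=a_{n-1}^\nu=\frac{1}{2}\sqrt{\frac{(n-1)(n+2\nu-2)}{(n+\nu-2)(n+\nu-1)}}.
\end{equation*}
Let $\widetilde J_n$ denote the constant tridiagonal matrix with all off-diagonals equal to $a_{n-1}^\nu$. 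Its eigenvectors are the sine vectors $u^{(j)}_k=\sin(jk\pi/(n+1))$ with eigenvalues $2a_{n-1}^\nu\cos(j\pi/(n+1))$, so $\rho(\widetilde J_n)=2a_{n-1}^\nu\cos(\pi/(n+1))$, which equals the right-hand side of the claimed bound after the factor of $2$ cancels with the $\tfrac12$ in $a_{n-1}^\nu$.

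The remaining step is the comparison $\rho(J_n^\nu)\le\rho(\widetilde J_n)$. Both matrices are entrywise non-negative and componentwise $J_n^\nu\le\widetilde J_n$ by the choice of $a_{n-1}^\nu$ as the maximum; the classical Perron--Frobenius monotonicity of the spectral radius under entrywise domination of non-negative matrices then gives the inequality, and since both matrices have zero diagonal their spectra are symmetric about $0$ so the spectral radius coincides with the largest eigenvalue. The main obstacle I anticipate is this final comparison: one must be slightly careful because $J_n^\nu$ has zero on the diagonal and is not strictly positive, but the strict positivity of its off-diagonals makes it irreducible, and the monotonicity of the Perron eigenvalue under entrywise domination for non-negative (irreducible) matrices is the standard tool that closes the gap. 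Chaining the three displayed estimates delivers the stated upper bound on $x_{n1}(\nu)$.
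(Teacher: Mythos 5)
The paper does not prove this lemma; it simply cites it as Corollary~2.3 of Area, Dimitrov, Godoy, and Ronveaux \cite{Area2004}, so there is no in-paper argument to compare against. Your proof stands on its own and is correct: the symmetrized recurrence coefficient $a_k^\nu=\tfrac12\sqrt{k(k+2\nu-1)/((k+\nu-1)(k+\nu))}$ is derived correctly, the rewriting $(2a_k^\nu)^2=1-\nu(\nu-1)/(k^2+(2\nu-1)k+\nu(\nu-1))$ gives monotonicity in $k$ precisely when $\nu\ge 1$, the constant Jacobi matrix $\widetilde J_n$ has eigenvalues $2a_{n-1}^\nu\cos(j\pi/(n+1))$, and the entrywise domination $J_n^\nu\le\widetilde J_n$ of nonnegative matrices yields $\rho(J_n^\nu)\le\rho(\widetilde J_n)$, which translates to the stated bound because the Jacobi spectrum is symmetric about zero. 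One small remark: the worry you raise about irreducibility at the end is unnecessary. The inequality $\rho(A)\le\rho(B)$ holds for \emph{any} nonnegative matrices with $0\le A\le B$ entrywise, with no irreducibility hypothesis (see, e.g., Horn and Johnson, \emph{Matrix Analysis}, Theorem~8.1.18); irreducibility is only needed for strictness statements, which you do not use. It is also worth noting that the case $\nu=1$ makes all $a_k^\nu=\tfrac12$ and the bound is then an equality, consistent with the Chebyshev polynomials of the second kind, which is a good sanity check on the sharpness of the comparison matrix.
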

The next lemma is well-known and it is valid for many other orthogonal polynomials.
\begin{lemma}[Olver et al. \cite{Olver2010}]\label{lemA2}
Denote by $y_{nk}(\nu)$, $k=0,1,\cdots ,n-1,n$, the local maxima of $|C_n^\nu(x)|$ enumerated in decreasing order, then $y_{n0}(\nu)=1, y_{nn}(\nu)=-1$, and we have
\begin{enumerate}
\item[$(a)$]\quad
   $y_{nk}(\nu)=x_{n-1,k}(\nu+1),\ k=1,\cdots, n-1.$
\item [$(b)$]\quad
$|C_n^\nu(y_{n0}(\nu))|>|C_n^\nu(y_{n1}(\nu))|>\cdots>|C_n^\nu(y_{n,[\frac{n+1}{2}]}(\nu))|.$
\item[$(c)$]\quad
$(C_n^\nu)^{(k)}(x)>0$ on $(x_{n1}(\nu),1)$ for all $k=0,1,\cdots, n.$
\end{enumerate}
\end{lemma}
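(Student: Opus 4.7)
The plan is to establish the three parts of Lemma~\ref{lemA2} separately, using three standard ingredients: the derivative identity \eqref{201}, a Sonine-type monotonicity function built from the Gegenbauer ODE, and Rolle's theorem for interlacing of zeros.

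For part (a), I would note that an interior point $y\in(-1,1)$ is a local maximum of $|C_n^\nu|$ precisely when $(C_n^\nu)'(y)=0$ and $C_n^\nu(y)\neq 0$. Since the zeros of $C_n^\nu$ are all simple, they are actually local minima of $|C_n^\nu|$, so the interior local maxima of $|C_n^\nu|$ coincide with the critical points of $C_n^\nu$. By \eqref{201} these critical points are exactly the zeros of $C_{n-1}^{\nu+1}$; enumerating both in decreasing order yields $y_{nk}(\nu)=x_{n-1,k}(\nu+1)$ for $k=1,\ldots,n-1$. The endpoints $\pm 1$ are not zeros of $C_n^\nu$, so they supply the boundary maxima $y_{n0}(\nu)=1$ and $y_{nn}(\nu)=-1$.

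For part (b), I would introduce the Sonine auxiliary function
\begin{equation*}
u(x):=[C_n^\nu(x)]^2+\frac{1-x^2}{n(n+2\nu)}[(C_n^\nu)'(x)]^2.
\end{equation*}
Differentiating and substituting the Gegenbauer ODE $(1-x^2)y''-(2\nu+1)xy'+n(n+2\nu)y=0$ should, after the cross term $2yy'$ cancels, yield
\begin{equation*}
u'(x)=\frac{4\nu x}{n(n+2\nu)}[(C_n^\nu)'(x)]^2\ge 0\quad\text{for }x\in[0,1].
\end{equation*}
At each critical point $y_{nk}\in(0,1)$, one has $(C_n^\nu)'(y_{nk})=0$, so $u(y_{nk})=[C_n^\nu(y_{nk})]^2$. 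Monotonicity of $u$ on $[0,1]$ then forces $|C_n^\nu(y_{nk})|$ to be strictly decreasing as $y_{nk}$ moves from $1$ toward the midpoint. The parity identity $C_n^\nu(-x)=(-1)^nC_n^\nu(x)$ extends the same conclusion to $[-1,0]$ and exhausts the indices $k\le[\tfrac{n+1}{2}]$.

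For part (c), iterating \eqref{201} gives $(C_n^\nu)^{(k)}(x)=\tfrac{2^k\Gamma(\nu+k)}{\Gamma(\nu)}C_{n-k}^{\nu+k}(x)$, a positive multiple of a Gegenbauer polynomial of degree $n-k$ with positive leading coefficient. By Rolle's theorem, $(C_n^\nu)'$ has a zero strictly between any two consecutive zeros of $C_n^\nu$; in particular its largest zero $x_{n-1,1}(\nu+1)$ is strictly less than $x_{n,1}(\nu)$. Iterating, $x_{n-k,1}(\nu+k)<x_{n,1}(\nu)$ for every $k\ge 1$. Hence $(C_n^\nu)^{(k)}$ vanishes nowhere on $(x_{n,1}(\nu),1]$ and, being a polynomial with positive leading coefficient and positive value at $x=1$, is strictly positive throughout $(x_{n,1}(\nu),1)$. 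The most delicate step is the Sonine identity in (b): one must choose the weight $1/n(n+2\nu)$ to balance the ODE coefficients exactly so that $2yy'$ cancels and the remaining $x(y')^2$-contributions combine with a definite sign; the other two parts then reduce to careful bookkeeping with tools already available in the excerpt.
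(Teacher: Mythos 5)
The paper does not prove Lemma~\ref{lemA2}; it is quoted as a known fact from the NIST handbook \cite{Olver2010} (DLMF \S18.14), so there is no ``paper proof'' to compare against. Your proof is a correct and essentially self-contained derivation of the cited facts. Part (a) is straightforward from simplicity of zeros and the derivative identity \eqref{201}. Part (b) uses the standard Sonine--Szeg\H{o} auxiliary function, and I verified the computation: with the Gegenbauer ODE $(1-x^2)y''-(2\nu+1)xy'+n(n+2\nu)y=0$, the cross terms $2yy'$ indeed cancel and one is left with $u'(x)=\frac{4\nu x}{n(n+2\nu)}(y')^2\ge 0$ on $[0,1]$, which together with $u(y_{nk})=C_n^\nu(y_{nk})^2$ at critical points and $u(1)=C_n^\nu(1)^2$ gives the strict decrease of $|C_n^\nu|$ at successive maximum points as $x$ decreases from $1$. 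Part (c) follows from the iterated form of \eqref{201}, $(C_n^\nu)^{(k)}=2^k\frac{\Gamma(\nu+k)}{\Gamma(\nu)}C_{n-k}^{\nu+k}$, plus the Rolle/interlacing observation $x_{n-k,1}(\nu+k)<x_{n1}(\nu)$; positivity of the leading coefficient then gives positivity on $(x_{n1}(\nu),1)$.

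One small point worth flagging: your handling of the upper index in (b) is slightly glossed. For $n$ odd, $|C_n^\nu|$ is even and the maximum points satisfy $y_{n,(n+1)/2}=-y_{n,(n-1)/2}$, so $|C_n^\nu(y_{n,(n-1)/2})|=|C_n^\nu(y_{n,(n+1)/2})|$ and the final comparison in the chain up to $[\tfrac{n+1}{2}]$ is an equality, not a strict inequality. Your Sonine argument proves the strict chain on $[0,1]$, i.e.\ up to index $\lfloor n/2\rfloor$; the extension by parity is fine but the last step to $[\tfrac{n+1}{2}]$ for odd $n$ cannot be strict. This is most likely a harmless transcription artifact in the statement of the lemma rather than a defect in your argument, and in any case the paper only invokes (b) to conclude that $\widetilde{F}_k'(1-\tfrac{10}{\lambda_k})\ge\widetilde{F}_k'(x)$ for $x\le 1-\tfrac{10}{\lambda_k}$, for which your version of (b) is amply sufficient.
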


\begin{proof}[Proof of Lemma~\ref{lem34}]
Direct computation by Matlab shows that Lemma~\ref{lem34} holds for $6\le k\le 50$, so in what follows we may assume $k>50$.

By Lemma~\ref{lemA1} and \eqref{201}, we know that the minimum of $\widetilde{F}_k'$ on $[0,1]$ is achieved at the point
\begin{align}\label{A03}
    x_{k-2,1}(\frac{7}{2})\le \sqrt{\frac{(k-3)(k+3)}{(k-\frac{1}{2})(k+\frac{1}{2})}}\cos(\frac{\pi}{k-1})\le 1-\frac{9.1}{k^2}.
\end{align}

 Taking $N=2$ in Lemma~\ref{lem33}, we obtain
    \begin{align}
     \widetilde{F}_k'(\cos{\zeta})
     &=\frac{24}{k(k+1)(k+2)(k+3)}C_{k-1}^{\frac{5}{2}}(\cos{\zeta})\notag\\
     &=8\sqrt{\frac{2}{\pi}}\Big((\sin{\zeta})^{-\frac{5}{2}}\Gamma(k)\Big(\frac{\cos{(\delta_{k-1,0})}}{\Gamma(k+\frac{5}{2})}+\frac{15}{8}\frac{\cos{(\delta_{k-1,1})}}{\Gamma(k+\frac{7}{2})\sin{\zeta}}\Big)+\widetilde{R}\Big)\notag\\
     &=8\sqrt{\frac{2}{\pi}}\Big(\frac{k^{\frac{5}{2}}\Gamma(k)}{{l^{\frac{5}{2}}}\Gamma(k+\frac{5}{2})}\big(\cos{((k+\frac{3}{2})\zeta-\frac{5\pi}{4})}
     +\frac{15}{8l}\frac{k}{(k+\frac{5}{2})}\cos{((k+\frac{5}{2})\zeta-\frac{3\pi}{4})}\big)
     +\widetilde{R}\Big)\label{A04},
\end{align}
where $\widetilde{R}$ satisfies
\begin{align}\label{A05}
    |\widetilde{R}|\le \frac{15}{8} \frac{\Gamma(k)}{\Gamma(k+\frac{9}{2})}(\sin{\zeta})^{-\frac{9}{2}}\cdot
\begin{cases}
\sec{\zeta}&\text{ if }0<\zeta\leq\frac{\pi}{4},\\
2\sin{\zeta}&\text{ if }\frac{\pi}{4}<\zeta<\frac{\pi}{2}.
\end{cases}
\end{align}
Let $\sin{\zeta}=\frac{l}{k}$. Then by \eqref{A03} we can assume $l\ge \sqrt{18}$. From \eqref{A05} we know that if $l\le \frac{k}{\sqrt{2}}$, then
\begin{align}\label{A06}
    |\widetilde{R}|\le \frac{15}{8l^\frac{9}{2}} \frac{k^\frac{9}{2}\Gamma(k)}{\Gamma(k+\frac{9}{2})}\frac{1}{\sqrt{1-\frac{l^2}{k^2}}}
    < \frac{15}{8l^\frac{9}{2}} \frac{1}{\sqrt{1-\frac{l^2}{k^2}}};
\end{align}
while if $l> \frac{k}{\sqrt{2}}$, then
\begin{align}\label{A07}
    |\widetilde{R}|\le \frac{15}{4l^\frac{7}{2}} \frac{k^\frac{7}{2}\Gamma(k)}{\Gamma(k+\frac{9}{2})}
    < \frac{15(\sqrt{2})^\frac{7}{2}}{4k^\frac{9}{2}}.
\end{align}
To get the desired lower bound, we shall use the following simple estimates.
\begin{equation}\label{A08}
    \cos(x+\delta)=\cos x-\delta \sin(x+h\delta)\ge \cos x-|\delta|.
\end{equation}
\begin{equation}\label{A09}
    \zeta-\sin\zeta\le (\frac{\pi}{2}-1) \sin^3{\zeta} \le \sin^3{\zeta}, \ 0<\zeta<\frac{\pi}{2}.
\end{equation}
 With the help of \eqref{A08} and \eqref{A09}, we have
 \begin{align}
    \cos{((k+\frac{3}{2})\zeta-\frac{5\pi}{4})}
    &=\cos((k+\frac{3}{2})\frac{l}{k}+(k+\frac{3}{2})(\zeta-\sin\zeta)-\frac{5\pi}{4})\notag\\
    &\ge \cos(l-\frac{5\pi}{4})-((k+\frac{3}{2})(\zeta-\sin\zeta)+\frac{3l}{2k})\notag\\
    &\ge  \cos(l-\frac{5\pi}{4})-((k+\frac{3}{2})(\frac{l}{k})^3+\frac{3l}{2k}).\label{A10}
\end{align}
Similarly, we get
\begin{align}
    \cos{((k+\frac{5}{2})\zeta-\frac{3\pi}{4})}\ge \cos(l-\frac{3\pi}{4})-((k+\frac{5}{2})(\frac{l}{k})^3+\frac{5l}{2k}).\label{A11}
\end{align}
Therefore it holds that
\begin{align*}
   &l^{-\frac{5}{2}}\Big(\cos{((k+\frac{3}{2})\zeta-\frac{5\pi}{4})}
     +\frac{15}{8l}\frac{k}{(k+\frac{5}{2})}\cos{((k+\frac{5}{2})\zeta-\frac{3\pi}{4})}\Big)\\
     \ge& -\big(\frac{15 \cos \left(l+\frac{\pi }{4}\right)}{8 l^{7/2}}+\frac{\cos \left(l-\frac{\pi }{4}\right)}{l^{5/2}}\big)-\Big(\frac{3}{2 l^{3/2} k}+\frac{75}{16 l^{5/2} k}+\frac{3 \sqrt{l}}{2 k^3}+\frac{75}{16 \sqrt{l} k^3}+\frac{\sqrt{l}}{k^2}+\frac{15}{8 \sqrt{l} k^2}\Big)\\
     =:&\varphi_k(l).
\end{align*}
Since $\frac{k^{\frac{5}{2}}\Gamma(k)}{\Gamma(k+\frac{5}{2})}<1$, we deduce that
\begin{equation*}
\widetilde{F}_k'(\cos{\zeta})\ge 8\sqrt{\frac{2}{\pi}}(\varphi_k^-(l)-|\widetilde{R}|),
\end{equation*}
where $\varphi_k^-(l)=\min\{\varphi_k (l),0\}$. Now we split $l$ into different intervals to get a relatively precise estimate.

If $\sqrt{18}\le l\le 5$, then it is straightforward to check that $\varphi_k(l)\ge 0$ and  so
\begin{equation*}
    \widetilde{F}_k'(\cos{\zeta})\ge -8\sqrt{\frac{2}{\pi}}(\frac{15}{8l^\frac{9}{2}} \frac{1}{\sqrt{1-\frac{l^2}{k^2}}})>-8\sqrt{\frac{2}{\pi}}\times 0.003>-0.081.
\end{equation*}

If $5<l\le 5.6$, then $\varphi_k^-(l)\ge -0.0059-0.004=-0.0099$, and \eqref{A06} implies $|\widetilde{R}|<0.0015$ and  so
\begin{equation*}
    \widetilde{F}_k'(\cos{\zeta})\ge -8\sqrt{\frac{2}{\pi}}\times 0.0114 >-0.081.
\end{equation*}

If $l\ge 6.8$, then  $\varphi_k^-(l)\ge -0.0087-0.003=-0.0117$, and either \eqref{A06} or \eqref{A07} implies $|\widetilde{R}|<0.0005$ and so
\begin{equation*}
    \widetilde{F}_k'(\cos{\zeta})\ge -8\sqrt{\frac{2}{\pi}}\times 0.0122>-0.081.
\end{equation*}

Finally, if $5.6<l<6.8$, then \eqref{A06} implies $|\widetilde{R}|<0.0009$. In this case, $0.532\pi<l-\frac{5\pi}{4}<0.915\pi$, and
$1.032\pi<l-\frac{3\pi}{4}<1.415\pi$. Now we can refine the estimate in \eqref{A08} to get better estimates than \eqref{A10} and \eqref{A11}. More precisely, we have
\begin{align}
    \cos{((k+\frac{3}{2})\zeta-\frac{5\pi}{4})}
    &=\cos((k+\frac{3}{2})\frac{l}{k}+(k+\frac{3}{2})(\zeta-\sin\zeta)-\frac{5\pi}{4})\notag\\
    &\ge \cos(l-\frac{5\pi}{4})-\sin(l-\frac{5\pi}{4})((k+\frac{3}{2})(\zeta-\sin\zeta)+\frac{3l}{2k})\notag\\
    &\ge  \cos(l-\frac{5\pi}{4})-\frac{2l}{k}\sin(l-\frac{5\pi}{4}).\label{A12}
\end{align}
Similarly there holds
\begin{align}\label{A13}
    \cos{((k+\frac{5}{2})\zeta-\frac{3\pi}{4})}\ge \cos(l-\frac{3\pi}{4})-\frac{5l}{2k}\sin(l-\frac{3\pi}{4}).
\end{align}
Now it is straightforward to compute
\begin{align*}
    &l^{-\frac{5}{2}}\Big(\cos{((k+\frac{3}{2})\zeta-\frac{5\pi}{4})}
     +\frac{15}{8l}\frac{k}{(k+\frac{5}{2})}\cos{((k+\frac{5}{2})\zeta-\frac{3\pi}{4})}\Big)\\
     \ge&  l^{-\frac{5}{2}}\Big(\cos(l-\frac{5\pi}{4})-\frac{2l}{k}\sin(l-\frac{5\pi}{4})\Big)
     +\frac{15}{8}\frac{l^{-\frac{7}{2}}k}{(k+\frac{5}{2})}\Big(\cos(l-\frac{3\pi}{4})-\frac{5l}{2k}\sin(l-\frac{3\pi}{4})\Big)\\
     \ge& -0.01,
\end{align*}
so
\begin{equation*}
    \widetilde{F}_k'(\cos{\zeta})\ge -8\sqrt{\frac{2}{\pi}}\times (0.01+0.0009)>-0.081,
\end{equation*}
which completes the proofs of Lemma \ref{lem34}.
\end{proof}

\begin{proof}[Proof of Lemma~\ref{lem35}]
    The proof is similar to that of Lemma~\ref{lem34} above. We first prove the following estimate at one point:
    \begin{equation}\label{A14}
        0.081\le\widetilde{F}_k'(1-\frac{10}{\lambda_k})\le 0.11, \quad k\ge 6.
    \end{equation}
    Direct computation by Matlab shows that Lemma~\ref{lem34} holds for $6\le k\le 50$, so in what follows we may assume $k> 50$. The main tool we use is still \eqref{A04}, and the only difference is that now $\cos\zeta=1-\frac{10}{\lambda_k}$.

By Taylor expansion, one easily obtains
\begin{eqnarray}\label{A15}
    \frac{2\sqrt{5}}{k}-\frac{3\sqrt{5}}{k^2}<\zeta<\frac{2\sqrt{5}}{k},
\end{eqnarray}
and
\begin{align}\label{A16}
    \sin\zeta\ge \frac{2 \sqrt{5}}{k}-\frac{3 \sqrt{5}}{k^2}.
\end{align}
By \eqref{A05} and \eqref{A16}, there holds
\begin{align}\label{A17}
    |\widetilde{R}|&\le \frac{105}{128} \frac{\Gamma(k)}{\Gamma(k+\frac{9}{2})}(\frac{2 \sqrt{5}}{k}-\frac{3 \sqrt{5}}{k^2})^{-\frac{9}{2}} (1-\frac{10}{\lambda_k})^{-1}\notag\\
    &\le \frac{105}{128}\times 20^{-\frac{9}{4}} (1 -\frac{1.5}{k})^{-\frac{9}{2}} (1-\frac{10}{\lambda_k})^{-1}\notag\\
    &\le 0.001.
\end{align}
   We observe that $0.17\pi<(k+\frac{3}{2})\zeta-\frac{5\pi}{4}<0.2\pi$, and $0.67\pi<(k+\frac{3}{2})\zeta-\frac{3\pi}{4}<0.7\pi$. Then, similar to the estimates in \eqref{A12} and \eqref{A13}, we have

\begin{align}\label{A18}
    \cos{((k+\frac{3}{2})\zeta-\frac{5\pi}{4})}&\le \cos{((k+\frac{3}{2})(\frac{2\sqrt{5}}{k}-\frac{3\sqrt{5}}{k^2})-\frac{5\pi}{4})}\notag\\
    &\le\cos(2\sqrt{5}-\frac{5\pi}{4})-\frac{9\sqrt{5}}{2k^2}\sin(2\sqrt{5}-\frac{5\pi}{4}),
\end{align}
\begin{align}\label{A19}
    \cos{((k+\frac{3}{2})\zeta-\frac{5\pi}{4})}&\ge \cos{(\frac{2\sqrt{5}}{k}(k+\frac{3}{2})-\frac{5\pi}{4})}\notag\\
    &\ge\cos(2\sqrt{5}-\frac{5\pi}{4})-\frac{3\sqrt{5}}{k}\sin(2\sqrt{5}-\frac{5\pi}{4}+\frac{3\sqrt{5}}{k}),
\end{align}
\begin{align}\label{A20}
    \cos{((k+\frac{5}{2})\zeta-\frac{3\pi}{4})}&\le\cos{((k+\frac{5}{2})(\frac{2\sqrt{5}}{k}-\frac{3\sqrt{5}}{k^2})-\frac{3\pi}{4}))}\notag\\
    &\le\cos(2\sqrt{5}-\frac{3\pi}{4})
    -(\frac{2\sqrt{5}}{k}-\frac{15\sqrt{5}}{2k^2})\sin(2\sqrt{5}-\frac{3\pi}{4}+\frac{2\sqrt{5}}{k}),
\end{align}
\begin{align}\label{A21}
    \cos{((k+\frac{3}{2})\zeta-\frac{3\pi}{4})}&\ge\cos{((\frac{2\sqrt{5}}{k}(k+\frac{5}{2})-\frac{3\pi}{4})}\notag \\
    &\ge\cos(2\sqrt{5}-\frac{3\pi}{4})-
    \frac{5\sqrt{5}}{k}\sin(2\sqrt{5}-\frac{3\pi}{4}).
\end{align}

So by \eqref{A18} and \eqref{A20}, we obtain
\begin{align*}
    &(\sin{\zeta})^{-\frac{5}{2}} \frac{\Gamma(k)}{\Gamma(k+\frac{5}{2})}
    \Big(\cos{(\delta_{k-1,0})}+\frac{15}{8}\frac{\cos{(\delta_{k-1,1})}}{(k+\frac{5}{2})\sin{\zeta}}\Big)\\
    \le&(2\sqrt{5}-\frac{3 \sqrt{5}}{k})^{-\frac{5}{2}}\frac{k^{\frac{5}{2}}\Gamma(k)}{\Gamma(k+\frac{5}{2})}
    \Big(0.8551-\frac{5.218}{k^2}
    +\frac{15}{8}\frac{-0.5186-0.81(\frac{2\sqrt{5}}{k}-\frac{15\sqrt{5}}{2k^2})}{(k+\frac{5}{2})(\frac{2\sqrt{5}}{k}-\frac{3 \sqrt{5}}{k^2})}  \Big)\\
    \le& 20^{-\frac{5}{4}}
    (0.638-\frac{2}{k})\\
    \le& 0.0151,
\end{align*}
which, together with \eqref{A17}, implies that
\begin{align*}
   \widetilde{F}_k'(1-\frac{10}{\lambda_k})
   \le 8\sqrt{\frac{2}{\pi}}(0.0151+0.001)
    \le 0.103\le 0.11.
\end{align*}

The other direction $\widetilde{F}_k'(1-\frac{10}{\lambda_k})>0.081$ is similar. The only difference is that we need to use \eqref{A19} and \eqref{A21} instead of \eqref{A18} and \eqref{A20}. We omit the details. Thus \eqref{A14} is proved.

Now in view of Lemma~\ref{lem34}, we see that $\widetilde{F}_k'(1-\frac{10}{\lambda_k})>-\min\limits_{0\le x\le 1} \widetilde{F}_k'(x)$. Then by Lemma~\ref{lemA2} $(b)$,  $\widetilde{F}_k'(1-\frac{10}{\lambda_k})\ge \widetilde{F}_k'(x)$ for all $0\le x\le 1-\frac{10}{\lambda_k}$. Moreover, the convexity of $\widetilde{F}_k'(x)$ on $[1-\frac{10}{\lambda_k}, 1]$ is guaranteed by Lemma~\ref{lemA2} $(c)$. Thus the proof of Lemma \ref{lem35} is completed.
\end{proof}

\medskip

\section*{Acknowledgements}

J. Wei is partially supported by
NSERC of Canada. We thank Professor Changfeng Gui for interesting discussions.
	
\medskip

\end{document}